\documentclass{amsart}
\usepackage[latin1]{inputenc}
\usepackage{color}
\usepackage{amssymb}
\usepackage{mathrsfs}
\usepackage[all]{xy}
\vfuzz2pt
\hfuzz2pt
\newtheorem{thm}{Theorem}[section]
\newtheorem{cor}[thm]{Corollary}
\newtheorem{lem}[thm]{Lemma}
\newtheorem{prop}[thm]{Proposition}
\theoremstyle{definition}
\newtheorem{defn}[thm]{Definition}
\newtheorem{rem}[thm]{Remark}

\newtheorem{example}[thm]{Example}
\numberwithin{equation}{section}

\newcommand{\re}{\mathfrak{Re}\,}
\newcommand{\im}{\mathfrak{Im}\,}

\def\epsilon{\varepsilon}

\newcommand{\fbl}{\text{FBL}}
\newcommand{\fvl}{\text{FVL}}
\newcommand{\vertiii}[1]{{\left\vert\kern-0.25ex\left\vert\kern-0.25ex\left\vert #1 
		\right\vert\kern-0.25ex\right\vert\kern-0.25ex\right\vert}}


\makeatletter
\@namedef{subjclassname@2020}{%
  \textup{2020} Mathematics Subject Classification}
\makeatother

\title{Free complex Banach lattices}

\author[D. de Hevia]{David de Hevia}
\address{Instituto de Ciencias Matem\'aticas (CSIC-UAM-UC3M-UCM)\\
Consejo Superior de Investigaciones Cient\'ificas\\
C/ Nicol\'as Cabrera, 13--15, Campus de Cantoblanco UAM\\
28049 Madrid, Spain.}
\email{david.dehevia@icmat.es}

\author[P. Tradacete]{Pedro Tradacete}
\address{Instituto de Ciencias Matem\'aticas (CSIC-UAM-UC3M-UCM)\\
Consejo Superior de Investigaciones Cient\'ificas\\
C/ Nicol\'as Cabrera, 13--15, Campus de Cantoblanco UAM\\
28049 Madrid, Spain.}
\email{pedro.tradacete@icmat.es}

\subjclass[2020]{46B42,47B91,47A25} 

\keywords{Free Banach lattice; complex Banach lattice; spectra of a lattice homomorphism}

\begin{document}

\setcounter{tocdepth}{1}
\date{\today}

\begin{abstract}
The construction of the free Banach lattice generated by a real Banach space is extended to the complex setting. It is shown that for every complex Banach space $E$ there is a complex Banach lattice $\fbl_{\mathbb C}[E]$ containing a linear isometric copy of $E$ and satisfying the following universal property: for every complex Banach lattice $X_{\mathbb C}$, every operator $T:E\rightarrow X_{\mathbb C}$ admits a unique lattice homomorphic extension $\hat{T}:\fbl_{\mathbb C}[E]\rightarrow X_{\mathbb C}$ with $\|\hat{T}\|=\|T\|$. The free complex Banach lattice $\fbl_{\mathbb C}[E]$ is shown to have analogous properties to those of its real counterpart. However, examples of non-isomorphic complex Banach spaces $E$ and $F$ can be given so that $\fbl_{\mathbb C}[E]$ and $\fbl_{\mathbb C}[F]$ are lattice isometric. The spectral theory of induced lattice homomorphisms on $\fbl_{\mathbb C}[E]$ is also explored.
\end{abstract}

\maketitle

\section{Introduction}

Free Banach lattices are a recent discovery in the theory of lattice structures and a new specimen of free object which has already proven its versatility to tackle certain problems in the interplay between Banach spaces and Banach lattices. Although free vector lattices had been known since the 1960s \cite{Baker,Bleier}, it was only in \cite{dePW} that B. de Pagter and A. W. Wickstead showed the existence of the free Banach lattice over a set. Motivated by the latter, A. Avil\'es, J. Rodr\'iguez and the second author introduced the free Banach lattice generated by a (real) Banach space. Given a Banach space $E$, the free Banach lattice generated by $E$ is a Banach lattice, denoted $\fbl[E]$, with the following properties:
\begin{enumerate}
\item there exists a linear isometric embedding $\delta_E: E\rightarrow \fbl[E]$,
\item for every Banach lattice $X$ and every operator $T:E\rightarrow X$ there exists a unique lattice homomorphism $\hat T:\fbl[E]\rightarrow X$ such that $\|\hat T\|=\|T\|$ and $\hat T\circ \delta_E=T$, i.e. the following diagram commutes:
$$
	\xymatrix{\fbl[E]\ar@{-->}^{\hat{T}}[drr]&& \\
	E\ar_{\delta_E}[u]\ar[rr]^T&&X}
$$
\end{enumerate}
The proof of existence of $\fbl[E]$ for every Banach space $E$, together with an explicit construction (to be recalled below) was given in \cite{ART}.

The assignment $E\mapsto \fbl[E]$ can be considered as a canonical functor between the category of Banach spaces (with bounded linear operators) and the category of Banach lattices (with lattice homomorphisms). Thus, it has been so far quite useful for addressing several questions on the interplay between these categories, such as the following ones: this construction has been used to provide examples of Banach lattices which are weakly-compactly generated as lattices but not as spaces, solving a question posed by J. Diestel \cite[Section 5]{ART}; it has been used to construct push-outs in the category of Banach lattices \cite{AT}, or to provide the first examples of lattice homomorphisms which do not attain their norm \cite{DMRR}; and, it has been used to show the existence of subspaces of Banach lattices without bibasic sequences \cite{OTTT}, solving a question from \cite{TT}.

Other recent relevant developments related to free Banach lattices include, among others, constructions of free Banach lattices over a lattice \cite{AMRR2,AR1}, study of chain conditions in $\fbl[E]$ \cite{APR}, existence of free Banach lattices with prescribed convexity conditions \cite{JLTTT}, relations with projectivity \cite{AMRT, AMR, AMR2}...

It should be noted that in all the above constructions these have only been considered for spaces (and lattices) over the real scalars. The purpose of this note is to provide an analogous construction in the complex setting, and explore similarities and differences with the setting of real scalars.

Since a complex Banach lattice is necessarily an appropriate complexification of a real Banach lattice, it is not unwise to conjecture that the free complex Banach lattice should be the complexification of some (real) free Banach lattice. In fact, given a complex Banach space one is naturally lead to consider $\fbl[E_\mathbb R]$, where $E_\mathbb R$ is just the space $E$ considered as a Banach space over $\mathbb R$, together with the canonical embedding $\delta_E:E\rightarrow \fbl[E_\mathbb R]_{\mathbb C}$ given as the complexification of $\delta_{E_\mathbb R}:E_\mathbb R\rightarrow \fbl[E_\mathbb R]$. However, this direct approach lacks a desirable stability concerning isometric properties and moreover fails the universal property when extending operators from $E$ to a complex Banach lattice which are not complexified operators. To fix this, we have to introduce an appropriate norm on $\fbl[E_\mathbb R]$, before complexifying, to get $\fbl_{\mathbb C}[E]$, as well as carefully define the canonical embedding $\delta_E:E\rightarrow \fbl_{\mathbb C}[E]$.

It is clear from the definition that linearly isomorphic (respectively isometric) real Banach spaces generate lattice isomorphic (respectively, isometric) free Banach lattices. A fundamental open question is whether there can exist non-isomorphic real Banach spaces $E$, $F$ so that $\fbl[E]$ and $\fbl[F]$ are lattice isomorphic. We will show that in the complex case there exist non-isomorphic complex Banach spaces whose corresponding free complex Banach lattices are isomorphic. In addition, it has been shown in \cite{OTTT} that under certain smoothness assumptions, $\fbl[E]$ and $\fbl[F]$ are lattice isometric precisely when $E$ and $F$ are linearly isometric. We will see here that there is an analogous statement for complex scalars.

The paper is organized as follows: after recalling some preliminaries on $\fbl[E]$ and complex Banach lattices in Section \ref{s:preliminaries}, we provide the explicit construction of $\fbl_{\mathbb C}[E]$ in Section \ref{s:fblC}, partly based on that of the free Banach lattice over the reals. To be more specific, for a complex Banach space $E$, the free complex Banach lattice $\fbl_{\mathbb C}[E]$ will be the complexification of an appropriate renorming of $\fbl[E_{\mathbb R}]$, where $E_{\mathbb R}$ is the space $E$ considered as a real Banach space. In Section \ref{s:conjugates}, we investigate the relation between $\fbl_{\mathbb{C}}$ and complex conjugates. In particular, we first show (Proposition \ref{p:complex conjugate}) that $\fbl_{\mathbb C}[E]$ is lattice isometric to $\fbl_{\mathbb C}[\overline{E}]$, where $\overline{E}$ denotes the complex conjugate of a complex Banach space $E$. This fact, in combination with the known examples of Banach spaces non-isomorphic to its complex conjugates \cite{Bourgain, Kalton:95} provide the first instances of non-isomorphic complex Banach spaces whose free Banach lattices are lattice isometric. A partial converse to Proposition \ref{p:complex conjugate}, under the assumption of smoothness of the duals is given in Proposition \ref{p:smoothdual}. In Section \ref{s:fvl}, we take a step back in order to consider free complex \emph{vector lattices} and show that a similar approach as in \cite{Troitsky} can be used to give an alternative proof of existence of $\fbl_{\mathbb C}$. Finally, in Section \ref{s:spectra}, given an endomorphism on a Banach space $T:E\to E$ we consider the induced lattice homomorphism $\overline{T}:\fbl_{\mathbb C}[E]\rightarrow \fbl_{\mathbb C}[E]$, and study the stability of the distinguished parts in the spectrum of $T$ with respect to those of $\overline{T}$. 

\section{Preliminaries}\label{s:preliminaries}

Let us start recalling that given a (real) Banach space $E$, \textit{the free Banach lattice generated by $E$}, denoted by $\fbl[E]$, can actually be constructed as follows: consider first the following expression on the set of functions $f:E^*\rightarrow \mathbb R$,
\begin{equation}\label{eq:free norm real}
    \|f\|_{\fbl[E]}=\sup\left\{\sum_{i=1}^n |f(x_i^*)|\,:\,n\in\mathbb{N}, \,\sup_{x\in B_E}\sum_{i=1}^n |x_i^*(x)|\leq 1\right\}.
\end{equation}
Note that $H_1[E]$, the space of positively homogeneous functions $f:E^*\rightarrow \mathbb R$ which satisfy $\|f\|_{\fbl[E]}<\infty$, with the pointwise ordering and the above norm is a Banach lattice. For each $x\in E$, let $\delta_x:E^*\rightarrow \mathbb R$ denote the evaluation function $\delta_x(x^*)=x^*(x)$. Then $\fbl[E]$ can be identified with the closed sublattice of $H_1[E]$ generated by the set $\{\delta_x:x\in E\}$ \cite[Theorem 2.5]{ART}.

Every operator between Banach spaces $T:E\rightarrow F$ admits a unique lattice homomorphism $\overline{T}:\fbl[E]\rightarrow \fbl[F]$ which extends $T$, in the sense that $\overline{T}\delta_E=\delta_F T$. Since the composition of lattice homomorphism is a lattice homomorphism, we have that $\overline{S\circ T}=\overline{S}\circ\overline{T}$ whenever the operators $S$ and $T$ (and their composition) are well defined. As a consequence, linearly isomorphic Banach spaces generate lattice isomorphic free Banach lattices. However, it is not known whether the converse of this statement holds.

Despite of this, in \cite{OTTT} several Banach space properties have been identified for a Banach space $E$ to be equivalent to certain lattice properties of $\fbl[E]$. These include for instance, finite dimensionality, separability or containing complemented copies of $\ell_1$.

For convenience, let us fixed the terminology concerning complex Banach lattices. By a \textit{complex Banach lattice} we mean the complexification $X_{\mathbb C}=X\oplus iX$ of a real Banach lattice, $X$, endowed with the norm $\|x+iy\|_{X_\mathbb{C}}=\||x+iy|\|_X$, where $|\cdot|:X_\mathbb{C}\to X^+$ is the mapping given by
\begin{equation}\label{eq:modulus}
|x+iy|=\sup_{\theta\in [0,2\pi]}\{x\cos\theta +y\sin \theta\}, \qquad \text{for every } x+iy\in X_\mathbb{C},    
\end{equation}
which is called \textit{the modulus} function. We refer to \cite[Section 3.2]{AA-book}, \cite[Section 2.11]{Schaefer-book} for a proof of the fact that the modulus is well-defined.

A complex subspace $Y$ of a complex Banach lattice $X_\mathbb{C}$ is said to be a \textit{complex sublattice} if $|x+iy|\in Y$ whenever $x+iy\in Y$. Equivalently, we can define a complex sublattice $Y$ of $X_\mathbb{C}$ as the complexification of a real sublattice $Z$ of $X$ \cite[Lemma 1.3]{Raynaud}. Similarly, a complex subspace $I$ of a complex Banach lattice $X_\mathbb{C}$ is said to be a \textit{complex ideal} if whenever $|z|\leq |w|$ and $w\in I$ imply that $z\in I$. Equivalently, a complex ideal $I$ of $X_\mathbb{C}$ can be defined as the complexification $I=I_0\oplus iI_0$ of a real ideal $I_0$ of $X$ \cite[Section 3.2, Ex. 7]{AA-book}.

Moreover, recall that for every $\mathbb{C}$-linear operator $T:X_\mathbb{C}\to Y_\mathbb{C}$ between two complex Banach lattices $X_\mathbb{C}$, $Y_\mathbb{C}$ there exists a unique pair of operators $T_1,T_2:X\to Y$ such that
$$T(x+iy)=(T_1+iT_2)(x+iy)=T_1x-T_2y+i(T_2x+T_1y),$$ 
for every $x+iy\in X_\mathbb{C}$ (see, for example, \cite[Section 1.1]{AA-book}).
If $T_2=0$, that is, if $T(x+iy)=S x+iS y$ for some operator $S:X\to Y$, we say that $T$ is a \textit{real operator} and that $T$ is the \textit{complexification} of $S$, written $T=S_\mathbb{C}$. On some occasions, for convenience, we will use the same symbol to represent an operator $T:X\to Y$ and its complexification, that is, $T(x+iy)=Tx+iTy$, for $x+iy\in X_\mathbb{C}$.

A real operator $T_\mathbb{C}:X_\mathbb{C}\to Y_\mathbb{C}$ is said to be \textit{positive} (resp. a \textit{lattice homomorphism}) if $T:X\to Y$ is positive (resp. a \textit{lattice homomorphism}). Complex lattice homomorphisms may be defined also as those $\mathbb{C}$-linear operators which preserve the modulus, that is, $T|z|=|Tz|$ for every $z\in X_\mathbb{C}$.

For a complex Banach space $E$, its dual space $E^*$ consists of all bounded $\mathbb C$-linear maps between $E$ and $\mathbb C$. Any complex Banach space $E$ can be seen as a real Banach space $E_\mathbb{R}$ if we restrict the scalar multiplication (of $E$) to the reals. Moreover, given $z^*\in E^*$ we can consider its real part $\re z^*$, which is an element of $(E_\mathbb R)^*$ and $\|z^*
\|_{E^*}=\|\re z^*\|_{(E_\mathbb R)^*}$. Conversely, given $x^*\in (E_\mathbb R)^*$, we can define
$$z^*(z)=x^*(z)-ix^*(iz), \qquad \text{ for every } z\in E,$$
and $\|z^*\|_{E^*}=\|x^*\|_{(E_\mathbb R)^*}$. The previous comments show that $(E^*)_\mathbb{R}$ and $(E_\mathbb R)^*$ are linearly isometric (cf. \cite[Theorem 1.9]{AA-book}).

\section{Construction of $\fbl_{\mathbb C}[E]$}\label{s:fblC}

\begin{defn}
Given a complex Banach space $E$, the \textbf{free complex Banach lattice} generated by $E$ is a \textsl{complex} Banach lattice $\fbl_{\mathbb C}[E]$ together with a $\mathbb C$-linear isometric embedding $\delta_E:E\rightarrow \fbl_{\mathbb C}[E]$ such that for every complex Banach lattice $X_\mathbb{C}$ and every $\mathbb C$-linear operator $T:E\rightarrow X_\mathbb{C}$, there is a unique lattice homomorphism $\hat T:\fbl_{\mathbb C}[E]\rightarrow X_\mathbb{C}$ such that $\hat T\circ \delta_E=T$. Moreover, $\|\hat T\|=\|T\|$.
\end{defn}

Observe that if this object exists for a complex Banach space $E$, then it is essentially unique in the sense that if there exists any other complex Banach lattice $L_\mathbb{C}$ with the previous property, we have that $L_\mathbb{C}$ is lattice isometric to $\fbl_\mathbb{C}[E]$

In this section we shall prove the existence of the free complex Banach lattice generated by a complex Banach space, providing an explicit description of this object.  Given a complex Banach space $E$, we can consider the real Banach space $E_\mathbb R$. We can equip the vector lattice $\fbl[E_\mathbb R]$ with the following norm:
$$
 \|f\|_{\fbl_{\mathbb C}[E]}=\sup\left\{\sum_{j=1}^m|f(\re z_j^*)|\,:\,m\in\mathbb N, (z_j^*)_{j=1}^m\subset E^*,\,\sup_{z\in B_E}\sum_{j=1}^m|z_j^*(z)|\leq1\right\}.   
$$
It should be noted that $\|\cdot\|_{\fbl_{\mathbb C}[E]}$ is a lattice norm in $\fbl[E_\mathbb{R}]$ (with the pointwise ordering) and is equivalent to the (real) free Banach lattice norm recalled in (\ref{eq:free norm real}):
\begin{equation}\label{eq:relation-norms}
\frac{1}{2}\| f \|_{\fbl[E_\mathbb R]}\leq \|f\|_{\fbl_{\mathbb C}[E]}\leq  \|f\|_{\fbl[E_\mathbb R]}, \qquad f\in \fbl[E_\mathbb R].    
\end{equation}

We define $\fbl_\mathbb{C}[E]$ as the complexification of the real Banach lattice $\fbl[E_\mathbb{R}]$ endowed with the complex Banach lattice norm $\||\cdot|\|_{\fbl_\mathbb{C}[E]}$. Observe that if $f=f_1+if_2\in \fbl_\mathbb{C}[E]=\fbl[E_\mathbb R]\oplus i \fbl[E_\mathbb R]$, the modulus of $f$ is given by
$$
|f|(x^*)=\sqrt{f_1(x^*)^2+f_2(x^*)^2}, \qquad \text{for every } x^*\in (E_\mathbb{R})^*,
$$
and, thus
\begin{equation}\label{eq:norm complex free}
 \||f|\|_{\fbl_{\mathbb C}[E]}=\sup\left\{\sum_{j=1}^m|f(\re z_j^*)|\,:\, (z_j^*)_{j=1}^m\subset E^*,\,\sup_{z\in B_E}\sum_{j=1}^m|z_j^*(z)|\leq1\right\}.       
\end{equation}
 For simplicity, henceforth, we shall take the above expression as the definition of  $\|\cdot\|_{\fbl_\mathbb{C}[E]}$, that is,  the norm $\| |\cdot|\|_{\fbl_\mathbb{C}[E]}$ will be represented by $\|\cdot\|_{\fbl_\mathbb{C}[E]}$.

Let $\delta_E:E\rightarrow \fbl_{\mathbb C}[E]$ be given by 
\begin{equation}\label{eq:deltaE}
\delta_E(z)=\delta_{E_\mathbb R}(z)-i\delta_{E_\mathbb R}(iz), \qquad z\in E.
\end{equation}

Observe that $\delta_E$ is a $\mathbb{C}$-linear map. Indeed, $\delta_E$ is $\mathbb{R}$-linear, as $\delta_{E_\mathbb{R}}$ has this property, and $\delta_E(iz)=\delta_{E_\mathbb R}(iz)+i\delta_{E_\mathbb R}(z)=i\delta_E(z)$. Moreover, this mapping is norm-preserving.

\begin{lem}\label{l:delta}
The map $\delta_E$ is a $\mathbb C$-linear isometric embedding.
\end{lem}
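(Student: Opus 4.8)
The plan is to verify that $\delta_E$ is norm-preserving, since its $\mathbb C$-linearity has already been checked above; a $\mathbb C$-linear isometry is automatically an embedding, so the whole task reduces to showing $\|\delta_E(z)\|_{\fbl_{\mathbb C}[E]}=\|z\|_E$ for every $z\in E$.

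First I would compute the modulus of $\delta_E(z)$. Reading off the real and imaginary parts of $\delta_E(z)=\delta_{E_\mathbb R}(z)-i\delta_{E_\mathbb R}(iz)$ and feeding them into the modulus formula for a complexification, one gets, for $x^*\in(E_\mathbb R)^*$,
$$|\delta_E(z)|(x^*)=\sqrt{x^*(z)^2+x^*(iz)^2},$$
using that $\delta_{E_\mathbb R}(w)(x^*)=x^*(w)$.

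The key step, and really the only computation with any content, is to evaluate this at $x^*=\re z_j^*$ for $z_j^*\in E^*$. From the identity $z^*(z)=(\re z^*)(z)-i(\re z^*)(iz)$ recorded in the preliminaries, I read off $(\re z_j^*)(z)=\re(z_j^*(z))$ and $(\re z_j^*)(iz)=-\im(z_j^*(z))$, whence
$$|\delta_E(z)|(\re z_j^*)=\sqrt{\re(z_j^*(z))^2+\im(z_j^*(z))^2}=|z_j^*(z)|.$$
Substituting this into the definition (\ref{eq:norm complex free}) of the norm collapses it to
$$\|\delta_E(z)\|_{\fbl_{\mathbb C}[E]}=\sup\left\{\sum_{j=1}^m|z_j^*(z)|\,:\,(z_j^*)_{j=1}^m\subset E^*,\ \sup_{w\in B_E}\sum_{j=1}^m|z_j^*(w)|\leq1\right\}.$$

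Finally I would evaluate this supremum, which is just the standard dual characterization of $\|z\|_E$. For the upper bound, any admissible family satisfies $\sum_j|z_j^*(w)|\leq1$ for all $w\in B_E$; taking $w=z/\|z\|$ (for $z\neq0$) gives $\sum_j|z_j^*(z)|\leq\|z\|$, so the supremum is at most $\|z\|$. For the matching lower bound I would invoke Hahn--Banach to pick $z_1^*\in E^*$ with $\|z_1^*\|=1$ and $z_1^*(z)=\|z\|$; this single functional is admissible since $\sup_{w\in B_E}|z_1^*(w)|=1$, and it already realizes the value $\|z\|$. Hence the supremum equals $\|z\|_E$, as desired. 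I do not anticipate any genuine obstacle: the content is entirely in the Pythagorean identity reducing $|\delta_E(z)|(\re z_j^*)$ to the complex modulus $|z_j^*(z)|$, after which the expression becomes the familiar formula for the norm of $z$.
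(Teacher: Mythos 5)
Your proof is correct and follows essentially the same route as the paper: the paper establishes the identity $\delta_E(z)(\re z^*)=z^*(z)$ (of which your Pythagorean computation $|\delta_E(z)|(\re z_j^*)=|z_j^*(z)|$ is just the modulus) and then declares the norm equality straightforward from definition (\ref{eq:norm complex free}). You have merely written out the ``straightforward'' part --- the scaling upper bound and the Hahn--Banach norming functional --- explicitly, which is exactly what the paper leaves to the reader.
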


\begin{proof}
Let us note that for every $z\in E$ and for every $z^*\in E^*$ we have that
\begin{eqnarray*}
\delta_E(z)(\re z^*)&=&\delta_{E_\mathbb R}(z)(\re z^*)-i\delta_{E_\mathbb R}(iz)(\re z^*)=\re z^*(z)-i\re z^*(iz) \\
&=&\re z^*(z)+i\im z^*(z)=z^*(z).
\end{eqnarray*}
Using the above identity, it is straightforward to check that $\|\delta_E(z)\|_{\fbl_\mathbb{C}[E]}=\|z\|$ for all $z\in E$ in view of the definition (\ref{eq:norm complex free}) of $\|\cdot\|_{\fbl[E]_\mathbb{C}}$.
\end{proof}

\begin{thm}
The complex Banach lattice $\fbl_\mathbb{C}[E]=\fbl[E_\mathbb R]\oplus i \fbl[E_\mathbb R]$ with the norm $\|\cdot\|_{\fbl_\mathbb C[E]}$, together with the map $\delta_E$ given above, form the free complex Banach lattice generated by $E$.
\end{thm}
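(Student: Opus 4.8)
The plan is to check the two defining properties of the free complex Banach lattice. That $\delta_E$ is a $\mathbb C$-linear isometric embedding is already Lemma \ref{l:delta}, so what remains is the universal property: for every complex Banach lattice $X_{\mathbb C}$ and every $\mathbb C$-linear $T\colon E\to X_{\mathbb C}$ there is a unique lattice homomorphism $\hat T\colon\fbl_{\mathbb C}[E]\to X_{\mathbb C}$ with $\hat T\circ\delta_E=T$ and $\norm{\hat T}=\norm{T}$. Essential uniqueness of the object then follows from the remark after the definition. To build $\hat T$, write $X_{\mathbb C}=X\oplus iX$ and consider the real operator $R=\re\circ\,T\colon E_{\mathbb R}\to X$, $z\mapsto\re T(z)$. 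By the universal property of the real free Banach lattice there is a unique real lattice homomorphism $S\colon\fbl[E_{\mathbb R}]\to X$ with $S\delta_{E_{\mathbb R}}=R$, and I set $\hat T=S_{\mathbb C}$, which is a complex lattice homomorphism. That $\hat T\circ\delta_E=T$ uses $\mathbb C$-linearity: from $T(iz)=iT(z)$ one gets $\re T(iz)=-\im T(z)$, so by \eqref{eq:deltaE}, $\hat T\delta_E(z)=S\delta_{E_{\mathbb R}}(z)-iS\delta_{E_{\mathbb R}}(iz)=\re T(z)+i\im T(z)=T(z)$.

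For uniqueness, note that the real and imaginary parts of the vectors $\delta_E(z)$, namely $\delta_{E_{\mathbb R}}(z)$ and $-\delta_{E_{\mathbb R}}(iz)$, run over a generating set of $\fbl[E_{\mathbb R}]$ as $z$ runs over $E$; since a complex sublattice is the complexification of a real one, $\delta_E(E)$ generates $\fbl_{\mathbb C}[E]$ as a closed complex sublattice. As the agreement set of two complex lattice homomorphisms is closed, linear, and (both maps preserving the modulus) closed under $\abs{\cdot}$, any two extensions agreeing on $\delta_E(E)$ coincide.

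The inequality $\norm{\hat T}\ge\norm{T}$ is immediate from $\hat T\circ\delta_E=T$ and $\norm{\delta_E(z)}=\norm{z}$. For the reverse, since $\hat T$ preserves the modulus and $\abs{f}\in\fbl[E_{\mathbb R}]^+$, one has $\norm{\hat T f}_{X_{\mathbb C}}=\norm{S\abs{f}}_X$ while $\norm{f}_{\fbl_{\mathbb C}[E]}=\norm{\,\abs{f}\,}_{\fbl_{\mathbb C}[E]}$; hence it suffices to prove $\norm{Sg}_X\le\norm{T}\,\norm{g}_{\fbl_{\mathbb C}[E]}$ for every $g\in\fbl[E_{\mathbb R}]^+$. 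Writing $\norm{Sg}_X=\sup\set{\phi(Sg):\phi\in(X^*)^+,\ \norm{\phi}\le1}$, fix such a $\phi$. Representing this positive functional through its associated AL-space, factor it as $\phi=\big(\int\cdot\,d\nu\big)\circ J$ for a lattice homomorphism $J\colon X\to L^1(\nu)$ with $\norm{J}\le1$. Then $\tilde T:=J_{\mathbb C}T\colon E\to L^1(\nu,\mathbb C)$ is $\mathbb C$-linear with $\norm{\tilde T}\le\norm{T}$, and $J\circ S$ is the lattice extension of $\re\tilde T$, so $\phi(Sg)=\int (J\circ S)g\,d\nu$. The crux is to realize this lattice homomorphism into $L^1(\nu)$ pointwise: for a.e.\ $\omega$ the assignment $z_\omega^*(z):=\tilde T(z)(\omega)$ defines an element of $E^*$ and $\big((J\circ S)g\big)(\omega)=g(\re z_\omega^*)$, whence $\phi(Sg)=\int g(\re z_\omega^*)\,d\nu$, while $\norm{\tilde T}\le\norm{T}$ reads $\int\abs{z_\omega^*(z)}\,d\nu\le\norm{T}\,\norm{z}$ for all $z$. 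Discretizing the integral by simple functions and invoking the positive homogeneity of $g$ rewrites $\int g(\re z_\omega^*)\,d\nu$ as a limit of sums $\sum_j g\big(\re(\nu(A_j)z_{\omega_j}^*)\big)$ whose underlying families $(\nu(A_j)z_{\omega_j}^*)_j\subset E^*$ satisfy, in the limit, $\sup_{z\in B_E}\sum_j\abs{\nu(A_j)z_{\omega_j}^*(z)}\le\norm{T}$; comparing with the defining supremum \eqref{eq:norm complex free} of $\norm{\cdot}_{\fbl_{\mathbb C}[E]}$ yields $\phi(Sg)\le\norm{T}\,\norm{g}_{\fbl_{\mathbb C}[E]}$, and a supremum over $\phi$ finishes the estimate.

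I expect the main obstacle to be precisely this last paragraph: the measurable pointwise disintegration $\big((J\circ S)g\big)(\omega)=g(\re z_\omega^*)$ of the lattice homomorphism into $L^1(\nu)$ (including the consistent, measurable choice of representatives, for which one may first reduce to a separable subspace of $E$) together with the rigorous discretization. This is exactly the point where the complex renorming — the complex modulus constraint $\sup_{z\in B_E}\sum_j\abs{z_j^*(z)}\le1$ built into $\norm{\cdot}_{\fbl_{\mathbb C}[E]}$ — is matched against the operator norm $\norm{T}$; the construction, the commutation $\hat T\circ\delta_E=T$, and the uniqueness are formal by comparison.
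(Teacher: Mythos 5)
Your construction of $\hat T=S_{\mathbb C}$ (with $S$ the real lattice homomorphic extension of $\re\circ T$), the verification $\hat T\circ\delta_E=T$, the uniqueness argument, and the reduction of $\norm{\hat T}\leq\norm{T}$ to the inequality $\norm{Sg}_X\leq\norm{T}\,\norm{g}_{\fbl_{\mathbb C}[E]}$ for positive $g$ all agree with the paper (the paper phrases uniqueness via real parts: any competing extension equals $(S_1)_{\mathbb C}$ with $S_1\delta_{E_{\mathbb R}}=\re T=S\delta_{E_{\mathbb R}}$, then density). The gap is in your final step, and it is not the technical measurability issue you flag but a structural one: the pointwise disintegration $z_\omega^*(z):=\tilde T(z)(\omega)$ with $z_\omega^*\in E^*$ of an operator into $L^1(\nu)$ does not exist in general. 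Take $E=(L^1[0,1])_{\mathbb C}$, $X_{\mathbb C}=E$, $T=\mathrm{Id}$, and $\phi$ integration against the constant $1$; then your $J$ is the identity, $\tilde T=\mathrm{Id}$, and $z\mapsto z(\omega)$ is not a bounded functional on $L^1$ for any $\omega$ (if some $k_\omega\in L^\infty$ represented it, testing against indicators of small intervals around $\omega$ would force a point mass). No reduction to separable subspaces repairs this. Moreover, even where a weak$^*$-measurable kernel happens to exist, your discretization needs the constraint $\sup_{z\in B_E}\sum_j\abs{\nu(A_j)z_{\omega_j}^*(z)}\leq\norm{T}$ \emph{uniformly} over the ball, i.e.\ norm (Bochner-type) approximation of $\omega\mapsto z_\omega^*$ by simple $E^*$-valued functions; pointwise a.e.\ approximation of the integrand is not enough, because the supremum in \eqref{eq:norm complex free} quantifies over all $z\in B_E$ at once.

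The correct implementation of exactly your intuition is to decompose the \emph{functional} rather than the measure space, which is what the paper does. Restrict (by density) to $f=\bigvee_{i=1}^p\delta_{E_{\mathbb R}}(x_i)-\bigvee_{j=1}^q\delta_{E_{\mathbb R}}(y_j)$, take $y^*\in(B_{X^*})^+$, split it as $y^*=\sum_{k=1}^p y_k^*$ with $y_k^*\geq 0$, and set $x_k^*=\norm{T}^{-1}(\re T)^*(y_k^*)$. These are globally defined elements of $(E_{\mathbb R})^*$ (morally your $\int_{A_k}z_\omega^*\,d\nu$, but produced by adjoints, so they exist without any kernel); the constraint $\sup_{z\in B_E}\sum_{k}\abs{z_k^*(z)}\leq 1$ for the associated complex functionals follows from the $\mathbb C$-homogeneity of $T$; and the supremum over all such finite decompositions recovers $y^*(Sf)$ by the Riesz--Kantorovich formula. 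This replaces your disintegration-plus-discretization by a purely finite, measure-free argument, and the Kakutani factorization through $L^1(\nu)$ is then not needed at all.
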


\begin{proof}
By Lemma \ref{l:delta}, $\delta_E$ is a $\mathbb C$-linear isometric embedding.

Given a complex Banach lattice $X_\mathbb{C}=X\oplus iX$, where $X$ is a (real) Banach lattice, we can consider the projection onto the real part $\re:X_\mathbb{C}\rightarrow X$ given by $\re(x+iy)=x$ for $x\in X$. This defines a $\mathbb{R}$-linear projection. For a $\mathbb C$-linear operator $T:E\rightarrow X_\mathbb{C}$, let $S:\fbl[E_\mathbb R]\rightarrow X$ denote the unique lattice homomorphism such that $S\circ  \delta_{E_\mathbb R}=\re \circ T$. Let $\hat T:\fbl[E_\mathbb R]_{\mathbb C}\rightarrow X_\mathbb{C}$ be the complexification of the operator $S$, that is, $\hat T(f+ig)=Sf+iSg$. 
Thus, $\hat T$ is a complex lattice homomorphism. 

Moreover, for every $z\in E$, using the fact that $T$ is $\mathbb C$-linear, we have that
\begin{align*}
\hat T\delta_E(z)&=\hat T(\delta_{E_\mathbb R}(z)-i\delta_{E_\mathbb R}(iz))=S\delta_{E_\mathbb R}(z)-iS\delta_{E_\mathbb R}(iz)\\
&=\re T (z)-i\re T(iz)=\re T (z)-i\re iT(z)=T(z),
\end{align*}
so $\hat T {\delta_E}=T$. 

Now, let us see the uniqueness of the extension $\hat{T}$. Let $\hat{T}_1:\fbl[E_\mathbb{R}]_\mathbb{C}\to X_\mathbb{C}$ be another complex lattice homomorphism such that $\hat{T}_1\delta_E=T$. As a complex lattice homomorphism, $\hat{T}_1$ satisfies $\hat{T}_1(f+ig)=S_1f+iS_1g$ for every $f,g\in \fbl[E_\mathbb{R}]$ for some lattice homomorphism $S_1$. By the definition of $\delta_E$, it follows that $S_1\delta_{E_\mathbb{R}}=\re T=S\delta_{E_\mathbb{R}}$. Since $\fbl[E_{\mathbb{R}}]=\overline{\text{lat}}\{\delta_{E_\mathbb{R}}(x)\::\:x\in E\}$, we conclude that $S$ and $S_1$ agree on $\fbl[E_{\mathbb{R}}]$ and, consequently, $\hat{T}=\hat{T}_1$.

We claim that $\|\hat T\|=\|T\|$. First, it should be noted that since $\hat{T}\delta_E=T$ and $\delta_E$ is an isometry, we have that $\|T\|\leq \|\hat{T}\|$. It remains to show that
$$
\||\hat{T}(f+ig)|\|_{X}\leq\|T\|\|f+ig\|_{\fbl_\mathbb{C}[E]},
$$ 
for all $f,g\in \fbl[E_\mathbb{R}]$. Since $\hat{T}$ is a lattice homomorphism, the preceding inequality is equivalent to
\begin{equation}\label{eq:main-inequality}
\|Sf\|_X\leq \|T\| \|f\|_{\fbl_\mathbb{C}[E]}, \qquad \text{ for every } f\in (\fbl[E_\mathbb{R}])^+.    
\end{equation}
Moreover, by density of $\text{lat}\{\delta_{E_\mathbb{R}}(x)\::\:x\in E\}$ in $\fbl[E_\mathbb{R}]$, it suffices to check the above identity when $f$ has the form (see, for instance \cite[Section 4.1, Ex. 8]{AB})  $$f=\bigvee_{i=1}^p \delta_{E_\mathbb{R}}(x_i)-\bigvee_{j=1}^q \delta_{E_\mathbb{R}}(y_j).$$

Fix a positive element
$$
f=\bigvee_{i=1}^p \delta_{E_\mathbb{R}}(x_i)-\bigvee_{j=1}^q \delta_{E_\mathbb{R}}(y_j).
$$
Since $Sf\geq 0$, equation (\ref{eq:main-inequality}) is equivalent to $y^*(Sf)\leq \|T\|\|f\|_{\fbl_\mathbb{C}[E]}$ for every $y^*\in (B_{X^*})^+$ (see \cite[Proposition 1.3.5]{Meyer-Nieberg}). Take an arbitrary decomposition $y^*=\sum_{k=1}^p y_k^*$, where $y_1^*,\ldots,y_p^*\in (X^*)^+$. For every $k\in \{1,\ldots,p\}$, define 
$$
x_k^*=\|T\|^{-1}(\re T)^*(y_k^*)\in (E_\mathbb{R})^*
$$ 
(note that $\re T$ is a $\mathbb{R}$-linear operator from $E_\mathbb{R}$ to $X$). Hence, if we put 
$$
z^*_k(z)=x_k^*(z)-ix_k^*(iz),\quad \text{ for } z\in E,
$$
then $z_k^*$ defines a $\mathbb{C}$-linear functional on $E$ with real part $\re z_k^*=x_k^*$  for $k=1,\ldots,p$. 

For each $z\in B_E$ and for each $k\in\{1,\ldots, p\}$ let $\theta_{z,k}$ be a real number such that 
$$
|z_k^*(z)|e^{i\theta_{z,k}}=z_k^*(z).
$$
Using in step $(*)$ the complex homogeneity of $T$, we have that
\begin{eqnarray*}
\sup_{z\in B_E} \sum_{k=1}^p  |z_k^*(z)|&=&\sup_{z\in B_E} \sum_{k=1}^p e^{-i\theta_{z,k}}z_k^*(z)=\sup_{z\in B_E} \sum_{k=1}^p z_k^*\left(e^{-i\theta_{z,k}}z\right) \\ 
&=& \sup_{z\in B_E} \sum_{k=1}^p x_k^*\left(e^{-i\theta_{z,k}}z\right)=\sup_{z\in B_E} \sum_{k=1}^p \frac{1}{\|T\|}y_k^*\left(\re T\left( e^{-i\theta_{z,k}}z\right)\right) \\ 
&\leq & \sup_{z\in B_E} \sum_{k=1}^p \frac{1}{\|T\|}y_k^*\left(\bigl|T\left(e^{-i\theta_{z,k}}z \right) \bigr| \right)\overset{(*)}{=} \sup_{z\in B_E} \sum_{k=1}^p y_k^*\left(\frac{1}{\|T\|}|T(z)| \right) \\ 
&=& \sup_{z\in B_E} y^*\left(\frac{1}{\|T\|}|T(z)| \right)\leq \|y^*\|\leq 1. 
\end{eqnarray*} 

By the definition of the norm $\|\cdot \|_{\fbl_\mathbb{C}[E]}$ it follows that
\begin{eqnarray*}
\|f\|_{\fbl_\mathbb{C}[E]}&\geq& \sum_{k=1}^p f(\re z_k^*)=\sum_{k=1}^p f(x_k^*)=\sum_{k=1}^p \left(\bigvee_{i=1}^p \delta_{E_\mathbb{R}}(x_i)(x_k^*)-\bigvee_{j=1}^q \delta_{E_\mathbb{R}}(y_j)(x_k^*) \right) \\
&=&\frac{1}{\|T\|} \sum_{k=1}^p \left(\bigvee_{i=1}^p y_k^*(\re Tx_i)-\bigvee_{j=1}^q y_k^*(\re Ty_j)  \right) \\
&\geq & \frac{1}{\|T\|} \sum_{k=1}^p \left(y_k^*(\re Tx_k)-y_k^*\left(\bigvee_{j=1}^q \re Ty_j \right) \right) \\
&=& \frac{1}{\|T\|}\left(\sum_{k=1}^p y_k^*(\re Tx_k)-y^*\left(\bigvee_{j=1}^q \re Ty_j \right)\right).
\end{eqnarray*}
If we take the supremum over all decompositions of $y^*$ into $p$ positive elements of $X^*$, bearing in mind the Riesz-Kantorovich formulas \cite[Theorem 1.21]{AB}, we obtain from the previous expression that
$$\|T\|\|f\|_{\fbl_\mathbb{C}[E]}\geq y^*\left(\bigvee_{k=1}^p \re Tx_k- \bigvee_{j=1}^q \re Ty_j \right)=y^*(Sf).$$
Remember that $S$ is a lattice homomorphism such that $S\delta_{E_\mathbb{R}}=\re T$. This concludes the proof.
\end{proof}

It should be recalled that the lattice homomorphisms of $\fbl[E]^*$ are precisely the evaluations functionals $\varphi_{x^*}(f)=f(x^*)$, $f\in \fbl[E]$, with $x^*\in E^*$ \cite[Corollary 2.7]{ART}. Now, we establish an analogous result in the complex case.

\begin{cor}
$\varphi\in \fbl_\mathbb C[E]^*$ is a lattice homomorphism if and only if there is $z^*\in E^*$ such that $\varphi(f+ig)=f(\re z^*)+ig(\re z^*)$ for all $f+ig\in \fbl_\mathbb C[E]$.
\end{cor}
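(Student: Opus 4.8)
The plan is to reduce the statement to the real characterization \cite[Corollary 2.7]{ART} together with the fact that a complex lattice homomorphism into $\mathbb{C}$ is forced to be a real operator. Throughout I identify $\fbl_\mathbb{C}[E]$ with $\fbl[E_\mathbb R]\oplus i\fbl[E_\mathbb R]$ as in the construction, and I use that the norm $\|\cdot\|_{\fbl_\mathbb{C}[E]}$ is equivalent to $\|\cdot\|_{\fbl[E_\mathbb R]}$ by (\ref{eq:relation-norms}), so that $\fbl_\mathbb{C}[E]^*$ consists exactly of the functionals $T_1+iT_2$ with $T_1,T_2\in\fbl[E_\mathbb R]^*$.

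For the sufficiency, given $z^*\in E^*$ set $\varphi_0(f)=f(\re z^*)$. Since $\re z^*\in (E_\mathbb R)^*$, the real characterization \cite[Corollary 2.7]{ART} shows that $\varphi_0$ is a lattice homomorphism on $\fbl[E_\mathbb R]$, and positive homogeneity of the elements of $\fbl[E_\mathbb R]$ together with the definition (\ref{eq:norm complex free}) gives $|f(\re z^*)|\le \|z^*\|\,\|f\|_{\fbl_\mathbb{C}[E]}$, so boundedness also holds for the complex norm. The map $\varphi(f+ig)=\varphi_0(f)+i\varphi_0(g)$ is then precisely the complexification $(\varphi_0)_\mathbb{C}$, which is a complex lattice homomorphism by definition.

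For the necessity, let $\varphi\in \fbl_\mathbb{C}[E]^*$ be a complex lattice homomorphism, i.e.\ a modulus-preserving $\mathbb{C}$-linear functional. Writing $\varphi=T_1+iT_2$ with $T_1,T_2\in \fbl[E_\mathbb R]^*$, so that $\varphi(f+ig)=T_1f-T_2g+i(T_2f+T_1g)$, I first show $T_2=0$. Evaluating the identity $\varphi(|w|)=|\varphi(w)|$ on a real element $w=f$ gives $T_1|f|+iT_2|f|=\sqrt{(T_1f)^2+(T_2f)^2}$, whose right-hand side is a nonnegative real number; comparing imaginary parts yields $T_2|f|=0$ for every $f$. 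As $|f|$ ranges over the whole positive cone $\fbl[E_\mathbb R]^+$, which spans $\fbl[E_\mathbb R]$, we get $T_2\equiv 0$. Thus $\varphi=(T_1)_\mathbb{C}$ is a real operator, and comparing the real parts of $\varphi(|f|)=|\varphi(f)|$ now reads $T_1|f|=|T_1f|$, i.e.\ $T_1$ is a (bounded) real lattice homomorphism on $\fbl[E_\mathbb R]$.

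It then remains to identify $T_1$. By \cite[Corollary 2.7]{ART} applied to the real Banach space $E_\mathbb R$ there is $x^*\in (E_\mathbb R)^*$ with $T_1(f)=f(x^*)$ for all $f$. Finally, by the isometric identification of $(E_\mathbb R)^*$ with $(E^*)_\mathbb R$ recalled in Section \ref{s:preliminaries}, there is a unique $z^*\in E^*$ with $\re z^*=x^*$; hence $\varphi(f+ig)=f(\re z^*)+ig(\re z^*)$, as claimed. The only delicate point is the passage from a modulus-preserving complex functional to a genuine real lattice homomorphism on $\fbl[E_\mathbb R]$; once $T_2=0$ is established, everything else is a direct transcription of the real theory.
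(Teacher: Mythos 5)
Your proof is correct, but it follows a genuinely different route from the paper's. The paper disposes of the converse in three lines via the uniqueness clause of the universal property: given a complex lattice homomorphism $\varphi$, set $z^*=\varphi\circ\delta_E\in E^*$; since $\delta_E(z)(\re z^*)=z^*(z)$ (proof of Lemma \ref{l:delta}), the evaluation $\varphi_{\re z^*}$ is another lattice homomorphism from $\fbl_\mathbb{C}[E]$ into the complex Banach lattice $\mathbb{C}$ satisfying $\varphi_{\re z^*}\circ\delta_E=z^*=\varphi\circ\delta_E$, whence $\varphi=\varphi_{\re z^*}$ by uniqueness of lattice homomorphic extensions. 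You instead argue structurally: writing $\varphi=T_1+iT_2$, you use modulus preservation on real elements to get $T_2\equiv 0$ and $T_1|f|=|T_1f|$, so $\varphi$ is the complexification of a bounded real lattice homomorphism on $\fbl[E_\mathbb{R}]$, and then the real characterization \cite[Corollary 2.7]{ART} together with the identification of $(E_\mathbb{R})^*$ with $(E^*)_\mathbb{R}$ produces $z^*$. Both arguments are sound; each needs its own ingredients. Yours requires the norm equivalence (\ref{eq:relation-norms}) and the real result from \cite{ART}, and your derivation of $T_2=0$ is essentially a self-contained proof, in the scalar-valued case, of the equivalence (recorded without proof in Section \ref{s:preliminaries}) between ``modulus-preserving $\mathbb{C}$-linear map'' and ``complexification of a real lattice homomorphism''; had you started from the paper's primary definition, that step would be true by definition and only the appeal to \cite[Corollary 2.7]{ART} would remain. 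The paper's route is shorter and purely formal — it needs neither the norm comparison nor the real characterization, only the already-established universal property — but it gives less structural information; your route makes explicit \emph{why} scalar-valued complex lattice homomorphisms are forced to be real operators, which is the conceptual heart of the statement.
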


\begin{proof} 
It is clear that the evaluation functional 
$$
\varphi_{\re z^*}(f+ig)=f(\re z^*)+ig(\re z^*)
$$
is a lattice homomorphism for any $z^*\in E^*$. 

Conversely, let $\varphi$ be a lattice homomorphism in $\fbl_\mathbb{C}[E]^*$. We define $z^*=\varphi\circ \delta_E\in E^*$. Then, $\varphi_{\re z^*}$ is a lattice homomorphism in $\fbl_\mathbb{C}[E]^*$ such that 
$$
\varphi_{\re z^*}\circ\delta_E=z^*=\varphi\circ\delta_E.
$$
By the uniqueness of the universal property of $\fbl_\mathbb{C}[E]$ we conclude that $\varphi=\varphi_{\re z^*}$.
\end{proof}

\begin{rem}
In a similar spirit as in \cite{JLTTT}, for $p\geq 1$ one can define $\fbl^p_{\mathbb C}[E]$, the free $p$-convex complex Banach lattice generated by a complex Banach space $E$. This can be done replacing the above norm with
$$
\|f\|_{\fbl^p_{\mathbb C}[E]}=\sup\left\{\left(\sum_{j=1}^m|f(\re z_j^*)|^p\right)^{\frac{1}{p}}\::\: (z_j^*)_{j=1}^m\subset E^*,\,\sup_{z\in B_E}\sum_{j=1}^m|z_j^*(z)|^p\leq1\right\}.
$$
For $p=\infty$, similarly to the real setting, it can be shown that $\fbl_\mathbb{C}^{(\infty)}[E]$ coincides with the sublattice generated by $\{\delta_E(z)\}_{z\in E}$ in $\mathcal{C}(B_{E^*})_\mathbb{C}$, which is precisely $\mathcal{C}_{ph}(B_{E^*})_{\mathbb C}$ (the space of all positively homogeneous $\text{weak}^*$-continuous functions from $B_{E^*}$ to $\mathbb C$).
\end{rem}

\begin{rem}
Let $E$ be a real Banach space. Since any function in $\fbl[E]$ is $w^*$-continuous on $B_{E^*}$ (see \cite[Lemma 4.10]{ART}), it follows that $\delta_E(E)$ is precisely the subset of functions of $\fbl[E]$ which are linear. The last statement remains true if we replace the word ``linear" by ``additive", given that functions of $\fbl[E]$ are positively homogeneous. We may formulate a similar result to the previous one in the complex case. Let $E$ be a complex Banach space. A function $f+ig\in \fbl_\mathbb{C}[E]$ belongs to $\delta_E(E)$ if, and only if, the map $z^*\in E^*\mapsto f(\re z^*)+ig(\re z^*)$ is $\mathbb{C}$-linear. To be a $\delta_E(z)$, for some $z\in E$, is also equivalent to the fact of $f+ig$ being additive and satisfying 
$$
(f+ig)(L_i x^*)=i(f+ig)(x^*), \quad \text{ for every } x^*\in (E_\mathbb{R})^*,
$$ 
where $L_i:(E_\mathbb{R})^*\to (E_\mathbb{R})^*$ is defined by $L_ix^*(x)=x^*(ix)$ for all $x^*\in (E_\mathbb{R})^*$ and all $x\in E$.
\end{rem}

\section{Complex conjugates and $\fbl_\mathbb C[E]$}\label{s:conjugates}
As was mentioned in the Introduction, in the real setting it is an open question whether there can exist non-isomorphic Banach spaces such that their corresponding free Banach lattices are lattice isomorphic. In this section we will analyze this problem in the complex case. 

If $E$ is a complex Banach space, its \textsl{complex conjugate} $\overline E$ is defined as the space $E$ with the scalar multiplication $\alpha\odot x=\overline\alpha x$. If $E=F_\mathbb C$ for some real Banach space $F$, the map $T:E\rightarrow \overline E$ given by $T(x+iy)=y+ix$ is easily seen to be a $\mathbb C$-linear isomorphism. The first example of a complex Banach space which is not $\mathbb C$-isomorphic to its complex conjugate is due to Bourgain \cite{Bourgain}. An elementary explicit example was later given by Kalton \cite{Kalton:95}.

\begin{prop}\label{p:complex conjugate}
For every complex Banach space $E$, $\fbl_{\mathbb C}[E]$ is lattice isometric to $\fbl_{\mathbb C}[\overline E]$.
\end{prop}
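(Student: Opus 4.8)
The plan is to show that $\fbl_{\mathbb C}[E]$ and $\fbl_{\mathbb C}[\overline E]$ are not merely isometric but \emph{literally the same} complex Banach lattice, so that the identity map serves as the desired lattice isometry. The starting point is the observation that the complex conjugate $\overline E$ has exactly the same underlying real Banach space as $E$: restricting the scalar multiplication $\alpha\odot x=\overline\alpha x$ to real scalars gives back the original real scalar multiplication, so $(\overline E)_\mathbb R=E_\mathbb R$ as real Banach spaces, and in particular $B_{\overline E}=B_E$ as sets. Consequently the real free Banach lattices coincide, $\fbl[(\overline E)_\mathbb R]=\fbl[E_\mathbb R]$, and their complexifications have the same underlying vector space and the same modulus function. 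Thus everything reduces to checking that the two renorming expressions $\|\cdot\|_{\fbl_\mathbb C[E]}$ and $\|\cdot\|_{\fbl_\mathbb C[\overline E]}$ define the same norm on $\fbl[E_\mathbb R]$.

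To compare the norms I would use the correspondence between $(E_\mathbb R)^*$ and the $\mathbb C$-linear functionals with a prescribed real part recalled in Section \ref{s:preliminaries}. Fix $x^*\in(E_\mathbb R)^*$. Its $\mathbb C$-linear extension to $E$ is $z^*(v)=x^*(v)-ix^*(iv)$, while its extension to $\overline E$ is $w^*(v)=x^*(v)-ix^*(i\odot v)=x^*(v)+ix^*(iv)$, since $i\odot v=-iv$ in $\overline E$. Hence $w^*=\overline{z^*}$, so $|w^*(v)|=|z^*(v)|$ for every $v$, and because $B_{\overline E}=B_E$ the two admissibility constraints $\sup_{v\in B_E}\sum_j|z_j^*(v)|\le 1$ and $\sup_{v\in B_{\overline E}}\sum_j|w_j^*(v)|\le 1$ are identical. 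Since moreover $\re z^*=\re w^*=x^*$, the quantities $f(\re z_j^*)$ and $f(\re w_j^*)$ appearing in the two suprema agree term by term. Running the supremum over all finite admissible families therefore yields $\|f\|_{\fbl_\mathbb C[E]}=\|f\|_{\fbl_\mathbb C[\overline E]}$ for every $f\in\fbl[E_\mathbb R]$, and it follows that the identity map is a lattice isometry.

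I expect the only genuine subtlety to be the sign flip $i\odot v=-iv$ in the recovery formula for $\overline E$: it is precisely this flip that turns the extension $w^*$ into the complex conjugate of $z^*$, which is exactly what keeps the moduli (and hence the constraint sets) unchanged. It is worth stressing why the universal property cannot be invoked directly to produce the isometry: $E$ and $\overline E$ are in general not even $\mathbb C$-linearly isomorphic (this is the content of the Bourgain and Kalton examples), so no $\mathbb C$-linear isometry $E\to\overline E$ exists to induce the identification. The map that does the job is the conjugate-linear identity on the generators, which falls outside the scope of the universal property but nonetheless descends to an honest complex-linear lattice isometry of the complexifications, precisely because the renorming depends only on the real parts $\re z_j^*$ and on the moduli $|z_j^*(v)|$, both of which are insensitive to conjugation.
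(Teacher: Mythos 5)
Your proof is correct and follows essentially the same route as the paper: both identify $(\overline E)_\mathbb{R}$ with $E_\mathbb{R}$, observe that the conjugation correspondence $z^*\mapsto \overline{z^*}$ (your $w^*$, the paper's $\psi_{z^*}$) is a bijection between $E^*$ and $\overline E^*$ preserving real parts and moduli, and conclude that the two norm expressions coincide, so the identity map is the lattice isometry. Your parametrization by the common real part $x^*\in(E_\mathbb{R})^*$ rather than by $z^*\in E^*$ is only a cosmetic difference.
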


\begin{proof} First, notice that $E_\mathbb{R}=\overline{E}_\mathbb{R}$. Hence, with the construction of the free complex Banach lattice detailed in the preceding section, we obtain that $\fbl_\mathbb{C}[E]$ and $\fbl_\mathbb{C}[\overline{E}]$ are the same set. We are going to show that the identity $f\in \fbl_\mathbb{C}[E]\mapsto f\in \fbl_\mathbb{C}[\overline{E}]$ is norm-preserving. 

Let $z^*$ be an element of $E^*$. If we define 
$$
\psi_{z^*}(z)=\re z^*(z)-i\im z^*(z),
$$ 
for every $z\in \overline{E}$, we obtain a bounded functional on $\overline{E}$. It is clear that $\psi_{z^*}$ is $\mathbb{R}$-linear, so it suffices to check that $\psi_{z^*}(i\odot z)=i\psi_{z^*}(z)$. Indeed, for every $z\in \overline{E}$ we have
\begin{eqnarray*}
\psi_{z^*}(i\odot z) &=& -\psi_{z^*}(iz)= -\re z^*(iz)+i\im z^*(iz)=\im z^*(z)+i\re z^*(z) \\
 &=& i(\re z^*(z)-i\im z^*(z)) =i\psi_{z^*}(z).
\end{eqnarray*}
In addition, note that the real parts and moduli of $z^*$ and $\psi_{z^*}$ agree on the set $E$. Conversely, if we begin with an element $w^*\in \overline{E}^*$, we can construct similarly a bounded functional on $E^*$ which has the same real part and modulus as $w^*$ at each point of $E$. It follows that
\begin{eqnarray*}
 \|f\|_{\fbl_{\mathbb C}[E]}&=& \sup\left\{\underset{j=1}{\overset{m}\sum}|f(\re z_j^*)|\,:\, (z_j^*)_{j=1}^m\subset E^*,\,\sup_{z\in B_{E}}\underset{j=1}{\overset{m}\sum}|z_j^*(z)|\leq1\right\}  \\
  &=& \sup\left\{\underset{j=1}{\overset{m}\sum}|f(\re \psi_{z_j^*})|\,:\, (z_j^*)_{j=1}^m\subset E^*,\,\sup_{z\in B_{\overline{E}}}\underset{j=1}{\overset{m}\sum}|\psi_{z_j^*}(z)|\leq1\right\} \\
 &=&\sup\left\{\underset{j=1}{\overset{m}\sum}|f(\re w_j^*)|\,:\, (w_j^*)_{j=1}^m\subset \overline{E}^*,\,\sup_{z\in B_{\overline{E}}}\underset{j=1}{\overset{m}\sum}|w_j^*(z)|\leq1\right\}  \\
&=& \|f\|_{\fbl_{\mathbb C}[\overline{E}]}.
\end{eqnarray*}

\end{proof}

\begin{cor}\label{c:isomorphic}
There exist non-isomorphic complex Banach spaces $E$ and $F$ such that $\fbl_{\mathbb C}[E]$ and $\fbl_{\mathbb C}[F]$ are lattice isometric.
\end{cor}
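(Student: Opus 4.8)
The plan is to combine Proposition \ref{p:complex conjugate} with the existence of a complex Banach space that is not $\mathbb C$-isomorphic to its own complex conjugate. As recalled at the start of this section, such spaces are known to exist: Bourgain \cite{Bourgain} produced the first example, and Kalton \cite{Kalton:95} later gave an elementary explicit one.

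Concretely, I would let $E$ be any such space, so that $E$ is not $\mathbb C$-isomorphic to $\overline E$, and set $F=\overline E$. Then $E$ and $F$ are, by construction, non-isomorphic complex Banach spaces. On the other hand, Proposition \ref{p:complex conjugate} applies verbatim to give that $\fbl_{\mathbb C}[E]$ is lattice isometric to $\fbl_{\mathbb C}[\overline E]=\fbl_{\mathbb C}[F]$. This is exactly the assertion of the corollary, so no further computation is needed.

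There is essentially no obstacle to overcome here: the statement is an immediate consequence of the already established Proposition \ref{p:complex conjugate} together with the cited counterexamples, and the two non-trivial ingredients have both been supplied elsewhere. The only point worth emphasizing in the write-up is the sharp contrast with the real setting, where it remains open whether even lattice isomorphism of the free Banach lattices forces the generating spaces to be isomorphic. The passage to the complex conjugate furnishes a purely complex mechanism for producing genuinely distinct Banach spaces whose free complex Banach lattices coincide up to lattice isometry, a phenomenon with no available analogue over the reals.
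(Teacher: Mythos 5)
Your proposal is correct and is exactly the paper's own argument: take $E$ not $\mathbb C$-isomorphic to its complex conjugate (Bourgain or Kalton's examples), set $F=\overline E$, and apply Proposition \ref{p:complex conjugate}. Nothing is missing.
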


\begin{proof}
Take any complex Banach space $E$ non-isomorphic to its complex conjugate, then apply Proposition \ref{p:complex conjugate} with $F=\overline E$.
\end{proof}

\begin{rem}
Corollary \ref{c:isomorphic} is not known to hold for real spaces.
\end{rem}

The next proposition exhibits that the lattice homomorphisms between complex free Banach lattices are \textit{composition operators}. We shall omit the proof of this fact since it can be readily adjusted from its real version \cite{LaTra,OTTT}.

\begin{prop}\label{prop:44}
Given two complex Banach spaces $E$ and $F$ and a (complex) lattice homomorphism $T:\fbl_\mathbb{C}[F]\to \fbl_{\mathbb{C}}[E]$, we define a map $\Phi_T:E^*\to F^*$ given by
$$\Phi_T z^*(w)=(T\delta_F(w))(\re z^*), \quad \text{for every }\; z^*\in E^*, \;\; w\in F.$$
The above map satisfies the following properties:

\begin{enumerate}
    \item  For every $f+ig \in \fbl_\mathbb{C}[E]$ we have that $T(f+ig)=(f+ig)\circ \Phi_T^{\re}$, where $\Phi_T^{\re}:(E_\mathbb{R})^*\to (F_\mathbb{R})^*$ is defined by $\Phi_T^{\re}(\re z^*)=\re (\Phi_T z^*)$.
    \item $\Phi_T$ is positively homogeneous.
    \item $\Phi_T$ is $w^*$ to $w^*$ continuous on bounded sets.
    \item For every $(z_j^*)_{j=1}^m\subset E^*$ we have
    \begin{equation}\label{eq:41}
       \sup_{w\in B_F} \sum_{j=1}^m |\Phi_T z_j^*(w)| \leq \|T\| \sup_{z\in B_E}\sum_{j=1}^m | z_j^*(z)|. 
    \end{equation}
\end{enumerate}
\end{prop}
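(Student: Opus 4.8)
The plan is to reduce everything to the real composition theorem for $\fbl$ (the real version of this statement, \cite{LaTra,OTTT}) by exploiting that a complex lattice homomorphism is nothing but the complexification of a real one. First I would recall that, being $\mathbb C$-linear and modulus-preserving, $T$ must be the complexification $T=S_{\mathbb C}$ of a (real) lattice homomorphism $S:\fbl[F_{\mathbb R}]\to\fbl[E_{\mathbb R}]$, so that $T(f+ig)=Sf+iSg$. Applying the real result to $S$ yields a map $\Psi:(E_{\mathbb R})^*\to(F_{\mathbb R})^*$, positively homogeneous and $w^*$-to-$w^*$ continuous on bounded sets, with $Sh=h\circ\Psi$ for every $h\in\fbl[F_{\mathbb R}]$. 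The crucial bookkeeping step is then to identify $\Phi_T$ with the ``$\mathbb C$-linearization'' of $\Psi$: using $\delta_F(w)=\delta_{F_{\mathbb R}}(w)-i\delta_{F_{\mathbb R}}(iw)$ together with $T=S_{\mathbb C}$, a direct computation gives
\[
\Phi_T z^*(w)=(T\delta_F(w))(\re z^*)=\Psi(\re z^*)(w)-i\,\Psi(\re z^*)(iw),
\]
which is exactly the formula from Section \ref{s:preliminaries} producing a $\mathbb C$-linear functional out of the real functional $\Psi(\re z^*)$. In particular this shows that $\Phi_T$ does land in $F^*$ and that $\re(\Phi_T z^*)=\Psi(\re z^*)$, i.e. $\Phi_T^{\re}=\Psi$.

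Once this identification is in place, properties (1)--(3) are essentially free. For (1), I would simply write $T(f+ig)=Sf+iSg=(f\circ\Psi)+i(g\circ\Psi)=(f+ig)\circ\Phi_T^{\re}$ for $f+ig\in\fbl_{\mathbb C}[F]$. For (2) and (3) I prefer a direct argument that avoids re-quoting the properties of $\Psi$: fixing $w$, the function $T\delta_F(w)$ is an element of $\fbl_{\mathbb C}[E]$, hence its real and imaginary parts are positively homogeneous and $w^*$-continuous on bounded subsets of $(E_{\mathbb R})^*$ (by \cite[Lemma 4.10]{ART}); since $z^*\mapsto\re z^*$ is $\mathbb R$-linear (so positively homogeneous), $w^*$-to-$w^*$ continuous, and maps bounded sets to bounded sets, the composition $z^*\mapsto(T\delta_F(w))(\re z^*)=\Phi_T z^*(w)$ inherits positive homogeneity and $w^*$-continuity on bounded sets for each fixed $w$, which is precisely (2) and (3).

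The step I expect to require real care is (4), because the naive attempt to transfer the real norm inequality for $\Psi$ fails: passing from real absolute values $|\re z^*(z)|$ to complex moduli $|z^*(z)|=\sqrt{\re z^*(z)^2+\re z^*(iz)^2}$ enlarges both sides of the inequality, so the two estimates do not chain. Instead I would argue directly from the definition (\ref{eq:norm complex free}) of the free complex norm. Assuming first $\sup_{z\in B_E}\sum_{j}|z_j^*(z)|\leq1$, and noting that $|\Phi_T z_j^*(w)|=|(T\delta_F(w))(\re z_j^*)|$, the family $(z_j^*)$ is admissible in (\ref{eq:norm complex free}) for the element $T\delta_F(w)\in\fbl_{\mathbb C}[E]$, whence
\[
\sum_{j=1}^m|\Phi_T z_j^*(w)|\leq\|T\delta_F(w)\|_{\fbl_{\mathbb C}[E]}\leq\|T\|\,\|\delta_F(w)\|_{\fbl_{\mathbb C}[F]}=\|T\|\,\|w\|\leq\|T\|
\]
for $w\in B_F$, using that $\delta_F$ is isometric (Lemma \ref{l:delta}). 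Taking the supremum over $w\in B_F$ gives (\ref{eq:41}) in the normalized case, and the general case follows by rescaling the $z_j^*$ and invoking the positive homogeneity of $\Phi_T$ already established in (2). The only genuinely delicate points, then, are the opening reduction $T=S_{\mathbb C}$ and the identification $\Phi_T^{\re}=\Psi$; everything else is bookkeeping with the defining formula for the complex free norm.
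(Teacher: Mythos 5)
Your proof is correct and takes essentially the approach the paper itself intends: the paper omits the proof, saying it ``can be readily adjusted from its real version \cite{LaTra,OTTT}'', and your argument is precisely that adjustment, carried out via the reduction $T=S_{\mathbb C}$ and the identification $\Phi_T^{\re}=\Psi$. Your one substantive addition --- deriving (4) directly from the definition of $\|\cdot\|_{\fbl_\mathbb{C}[E]}$ applied to $T\delta_F(w)$, after correctly observing that the real inequality for $\Psi$ does not chain with the complex moduli --- is exactly the detail where the adjustment is not automatic, and you handle it correctly.
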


\begin{rem}
Occasionally, it may be helpful to keep in mind the following identity for the $(1,weak)$-norm of a finite sequence $(z_j^*)_{j=1}^m\subset E^*$: $$ \sup_{z\in B_E}\sum_{j=1}^m | z_j^*(z)|=\sup\left\{\bigl\|\sum_{j=1}^m \varepsilon_j z^*_j \bigr\|\::\:|\varepsilon_j|=1 \text{ for } j=1,\ldots,m\right\}.$$
\end{rem}

\begin{rem}
If $T$ is a lattice isomorphism, then $\Phi_T$ is bijective and $\Phi_T^{-1}=\Phi_{T^{-1}}$. Therefore, if $T$ is also an isometry, then we deduce from the inequality (\ref{eq:41}) that $\Phi_T$ (and, for the same reason, $\Phi_T^{-1}$) preserves $(1,weak)$-norms of finite sequences.
\end{rem}

Recall that given a Banach space $Z$, a supporting functional at a point $z\in Z$ is an element $f_z\in Z^*$ such that $\|f_z\|=1$ and $f_z(z)=\|z\|$. Recall that $Z$ is said to be \textit{smooth} if for every element $z\in Z$, with $z\neq 0$, there exists a unique supporting functional $f_z$ at $z$. 

It was shown in \cite{OTTT} that if $E$ and $F$ are real Banach spaces with smooth duals, then every lattice isometry $T:\fbl[E]\rightarrow \fbl[F]$ is necessarily induced by a linear isometry between $E$ and $F$. Now, we are going to establish a version for complex scalars of this result in Proposition \ref{p:smoothdual}. It should be noted that this proposition provides a partial converse to Proposition \ref{p:complex conjugate}.

The following elementary observation will be crucial in the proof of the next lemma.

\begin{rem}\label{r:increasing}
For each pair $x,y$ of vectors in a Banach space $Z$, the function 
$$
t\in(0,+\infty)\mapsto \frac{\|x+ty\|-\|x\|}{t}
$$
is increasing. Indeed, given two positive real numbers $s<t$, by the convexity of the norm function $f(z)=\|z\|$, we have
\begin{eqnarray*}
\frac{f(x+sy)-f(x)}{s} &=& \frac{f\bigl(\frac{s}{t}(x+ty)+\bigl(1-\frac{s}{t} \bigr)x \bigr)-f(x)}{s} \\ 
&\leq&  \frac{\frac{s}{t}f(x+ty)+\bigl(1-\frac{s}{t} \bigr)f(x)-f(x)}{s} =\frac{f(x+ty)-f(x)}{t}.
\end{eqnarray*}
\end{rem}

It is well-known that a Banach space $Z$ is smooth at $z\in Z\backslash\{0\}$ if and only if the norm $\|\cdot\|$ (of $Z$) is \textit{G\^{a}teaux differentiable} at $z$, that is, if there exists $F(z)\in (Z_\mathbb{R})^*$ such that $$
F(z)(w)=\lim_{t\rightarrow0}\frac{\|z+tw\|-\|z\|}{t}, \quad \text{ for every } w\in Z, 
$$ 
where the above limit is assumed to be taken in $\mathbb{R}$ \cite[Corollary 1.5]{Deville}. Moreover, the \textit{G\^{a}teaux derivative} $F(z)$ is a supporting functional at $z$ (see, for instance, \cite[p. 2]{Deville}). Thus, in the case that $\|\cdot\|$ is smooth at $z\neq 0$, we have
$$\lim_{t\to 0} \frac{\|z+tw\|-\|z\|}{t}=\mathfrak{Re} f_{z}(w),$$
for every $w\in Z$.

\begin{lem}
Let $z,w$ be given in a smooth complex Banach space $Z$ with $z\neq 0$. Then, $$\lim_{t\to 0^+}\frac{\sup_{|\varepsilon|=1}\|z+\varepsilon tw\|-\|z\|}{t}=|f_z(w)|,$$
where $f_{z}\in Z^*$ stands for the unique supporting functional at $z$.
\end{lem}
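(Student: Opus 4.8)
The plan is to rewrite the quantity inside the limit as a supremum of difference quotients over the compact unit circle, and then justify interchanging this supremum with the limit $t\to 0^+$. Concretely, set $h(\varepsilon,t)=\frac{\|z+\varepsilon tw\|-\|z\|}{t}$ for $|\varepsilon|=1$ and $t>0$, so that the expression whose limit we seek is exactly $G(t):=\sup_{|\varepsilon|=1}h(\varepsilon,t)$. First I would record the elementary identity $\sup_{|\varepsilon|=1}\re(\varepsilon c)=|c|$ for every $c\in\mathbb{C}$ (attained at $\varepsilon=\overline{c}/|c|$, since then $\varepsilon c=|c|$), which identifies the claimed value $|f_z(w)|$ with $\sup_{|\varepsilon|=1}\re(\varepsilon f_z(w))$.

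The pointwise ingredient is the G\^{a}teaux differentiability displayed just before the statement: applying it in the direction $\varepsilon w$ and using the $\mathbb{C}$-linearity of $f_z$ gives, for each fixed $\varepsilon$ with $|\varepsilon|=1$, that $h(\varepsilon,t)\to\re f_z(\varepsilon w)=\re(\varepsilon f_z(w))$ as $t\to 0^+$. Denoting this limit by $h_0(\varepsilon)$, the lower bound is immediate: for every fixed $\varepsilon$ one has $\liminf_{t\to 0^+}G(t)\ge \lim_{t\to 0^+}h(\varepsilon,t)=h_0(\varepsilon)$, and taking the supremum over $\varepsilon$ yields $\liminf_{t\to 0^+}G(t)\ge|f_z(w)|$.

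The main obstacle is the reverse inequality, i.e. upgrading the pointwise convergence $h(\cdot,t)\to h_0$ to something uniform in $\varepsilon$ so that the supremum can be passed through the limit. Here I would invoke Remark \ref{r:increasing}: with $x=z$ and $y=\varepsilon w$, the map $t\mapsto h(\varepsilon,t)$ is increasing, hence decreases monotonically to $h_0(\varepsilon)$ as $t\to0^+$. Both $\varepsilon\mapsto h(\varepsilon,t)$ (continuity of the norm) and $\varepsilon\mapsto h_0(\varepsilon)$ are continuous on the compact unit circle, so Dini's theorem applies along any sequence $t_n\downarrow 0$ and gives uniform convergence; since $|\sup_\varepsilon h(\varepsilon,t_n)-\sup_\varepsilon h_0(\varepsilon)|\le\sup_\varepsilon|h(\varepsilon,t_n)-h_0(\varepsilon)|$, this forces $G(t_n)\to\sup_{|\varepsilon|=1}h_0(\varepsilon)=|f_z(w)|$. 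As $t\mapsto G(t)$ is itself increasing (a supremum of increasing functions), its limit as $t\to 0^+$ exists and equals the limit along $t_n$, namely $|f_z(w)|$, which is the assertion. As an alternative to Dini, one may instead observe that $\varepsilon\mapsto h(\varepsilon,t)$ is $\|w\|$-Lipschitz uniformly in $t$, so the family is equicontinuous, and equicontinuity together with pointwise convergence on a compact set again yields the required uniform convergence.
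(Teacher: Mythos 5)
Your proof is correct and follows essentially the same route as the paper's: pointwise directional limits $\frac{\|z+t\varepsilon w\|-\|z\|}{t}\to\re(\varepsilon f_z(w))$ from smoothness, monotonicity of the difference quotients in $t$ (Remark \ref{r:increasing}), and compactness of the unit circle to upgrade pointwise to uniform control so the supremum passes through the limit. The only packaging difference is that you cite Dini's theorem (with an equicontinuity variant as backup) where the paper carries out the underlying open-cover compactness argument by hand, which is precisely the proof of Dini in this setting.
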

\begin{proof}
Since $Z$ is smooth, we have that
$$\lim_{t\to 0} \frac{\|z+tw\|-\|z\|}{t}=\mathfrak{Re} f_{z}(w).$$
Thus, for every $\theta \in [0,2\pi]$ we have that
$$\lim_{t\to 0} \frac{\|z+te^{i\theta}w\|-\|z\|}{t}=\mathfrak{Re} f_{z}(e^{i\theta} w)=\mathfrak{Re} f_{z}(w)\cos \theta - \mathfrak{Im} f_{z}(w)\sin \theta,$$
so that there exists $\theta_0 \in [0, 2\pi ]$ such that $\lim_{t\to 0} \frac{\|z+te^{i\theta_0}w\|-\|z\|}{t}=|f_{z}(w)|$.

If $t>0$, then by Remark \ref{r:increasing} we have
$$
\frac{\sup_{|\varepsilon|=1}\|z+\varepsilon tw\|-\|z\|}{t} =\sup_{|\varepsilon|=1} \frac{\|z+\varepsilon tw\|-\|z\|}{t} \geq \frac{\|z+te^{i\theta_0}w\|-\|z\|}{t}\geq |f_{z}(w)|,
$$
and thus, 
$$
\liminf_{t\to 0^+} \frac{\sup_{|\varepsilon|=1}\|z+\varepsilon tw\|-\|z\|}{t}\geq |f_{z}(w)|.
$$

On the other hand, fix $\delta>0$. Given $|\varepsilon|=1$, there is $t_\varepsilon>0$ such that for all $t\in (0, t_\varepsilon)$ we have
$$\frac{\|z+\varepsilon tw\|-\|z\|}{t}< \lim_{t\to 0^+} \frac{\|z+\varepsilon tw\|-\|z\|}{t}+\delta\leq |f_{z}(w)|+\delta.$$
Thus, the family of sets
$$
U_t=\left\{\varepsilon\in \partial \mathbb{D}\::\: \frac{\|z+\varepsilon tw\|-\|z\|}{t}<|f_z(w)|+\delta\right\}, \quad \text{ for } t\in(0,+\infty),
$$
is an open cover of $\partial \mathbb{D}=\{\varepsilon\in\mathbb{C}\::\:|\varepsilon|=1\}$. By the compactness of $\partial \mathbb{D}$, there exist $t_1,\ldots, t_n$ such that $\partial \mathbb{D}\subset \cup_{i=1}^n U_{t_i}$. By Remark \ref{r:increasing}, we have that $\cup_{i=1}^n U_{t_i}=U_{t_0}$, where $t_0=\min_{1\leq i\leq n} t_i$.
Therefore, we get that
$$
\frac{\|z+\varepsilon tw\|-\|z\|}{t}<  |f_{z}(w)|+\delta, \quad \text{ for every } t\in (0,t_0) \text{ and every } |\varepsilon|=1.
$$
Taking supremum over all $|\varepsilon|=1$ in the above equation, and limit superior for $t\to 0^+$, we conclude that
$$\limsup_{t\to 0^+}\frac{\sup_{|\varepsilon|=1}\|z+\varepsilon tw\|-\|z\|}{t}\leq |f_{z}(w)|.$$
We have proven that
$$\lim_{t\to 0^+}\frac{\sup_{|\varepsilon|=1}\|z+\varepsilon tw\|-\|z\|}{t}=|f_z(w)|.$$
\end{proof}


\begin{prop}\label{p:smoothdual}
Let $E$, $F$ be complex Banach spaces whose corresponding duals $E^*$, $F^*$ are assumed to be smooth. If $\fbl_\mathbb{C}[E]$ is lattice isometric to $\fbl_\mathbb{C}[F]$, then $E$ is isometric to $F$ or $\overline{F}$.
\end{prop}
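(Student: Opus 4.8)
The plan is to transfer the statement to the induced maps on dual spaces and to show that, up to conjugation, this map is a weak$^*$-continuous linear surjective isometry. Fix a lattice isometry $T:\fbl_{\mathbb C}[F]\to\fbl_{\mathbb C}[E]$ and consider the associated map $\Phi_T:E^*\to F^*$ from Proposition \ref{prop:44}. By the remarks following that proposition, $\Phi_T$ is a bijection (with inverse $\Phi_{T^{-1}}$), it is positively homogeneous and $w^*$-$w^*$ continuous on bounded sets, and both $\Phi_T$ and $\Phi_T^{-1}$ preserve the $(1,weak)$-norm of every finite sequence. Taking sequences of length one already yields $\|\Phi_T z^*\|=\|z^*\|$ for every $z^*\in E^*$, so $\Phi_T$ maps $S_{E^*}$ onto $S_{F^*}$.

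First I would extract a pointwise identity for the supporting functionals. Using the formula for the $(1,weak)$-norm recalled after Proposition \ref{prop:44} together with positive homogeneity, for all $z^*,w^*\in E^*$ and all $t>0$ one has
$$\sup_{|\varepsilon|=1}\|z^*+t\varepsilon w^*\|=\sup_{|\varepsilon|=1}\|\Phi_T z^*+t\varepsilon\,\Phi_T w^*\|,$$
since the left-hand side is the $(1,weak)$-norm of the pair $(z^*,tw^*)$ and the right-hand side that of $(\Phi_T z^*,\Phi_T(tw^*))$. Subtracting $\|z^*\|=\|\Phi_T z^*\|$, dividing by $t$ and letting $t\to 0^+$, the preceding Lemma (applicable since $E^*$ and $F^*$ are smooth and $\Phi_T z^*\neq 0$) gives
$$|f_{z^*}(w^*)|=|f_{\Phi_T z^*}(\Phi_T w^*)|,\qquad z^*\neq 0,$$
where $f_{z^*}\in E^{**}$ and $f_{\Phi_T z^*}\in F^{**}$ are the unique supporting functionals of $E^*$ at $z^*$ and of $F^*$ at $\Phi_T z^*$.

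The heart of the argument, and the step I expect to be the main obstacle, is to upgrade this modulus identity to genuine linearity of $\Phi_T$, and then to resolve the resulting phase ambiguity. Since $f_{z^*}$ is $\mathbb{C}$-linear, the identity shows that $f_{z^*}(w^*)=0$ if and only if $f_{\Phi_T z^*}(\Phi_T w^*)=0$, so $\Phi_T$ carries each tangent hyperplane $\ker f_{z^*}$ onto $\ker f_{\Phi_T z^*}$, and by bijectivity this correspondence exhausts the whole family of such hyperplanes. Smoothness of $E^*$ and $F^*$ ensures that these tangent hyperplanes are abundant enough to pin down the linear structure of the unit balls, and I would combine hyperplane preservation with positive homogeneity and the norm identity to conclude that $\Phi_T$ is real-linear. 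The remaining ambiguity is genuinely where the dichotomy enters: evaluating the identity at $iw^*$ and using the $\mathbb{C}$-linearity of $f_{z^*}$ gives $|f_{\Phi_T z^*}(\Phi_T(iw^*))|=|f_{\Phi_T z^*}(\Phi_T w^*)|$ for all $z^*$, which I expect to force that the induced complex structure $J=\Phi_T\circ(i\,\cdot)\circ\Phi_T^{-1}$ on $F^*$ coincides with $\pm(i\,\cdot)$, so that $\Phi_T$ is either $\mathbb{C}$-linear or conjugate-linear. Extracting linearity from a purely modular identity, and in particular excluding exotic isometric complex structures so as to control this global phase, is the delicate part, and it is precisely where the smoothness hypotheses on the duals must be used in full (rather than merely through pairs).

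Finally I would descend to the spaces themselves. In the $\mathbb{C}$-linear case, $\Phi_T:E^*\to F^*$ is a $\mathbb{C}$-linear surjective isometry which is $w^*$-$w^*$ continuous on bounded sets; by Krein--Smulian each $y\circ\Phi_T$ ($y\in F$) is $w^*$-continuous, so $\Phi_T=S^*$ for a surjective isometry $S:F\to E$, whence $E$ is isometric to $F$. In the conjugate-linear case, regard $\Phi_T$ as a $\mathbb{C}$-linear map $E^*\to(\overline F)^*=\overline{F^*}$ and argue identically to obtain a surjective isometry $S:\overline F\to E$, so that $E$ is isometric to $\overline F$. In either case we reach the desired conclusion, which, as noted before the statement, provides a partial converse to Proposition \ref{p:complex conjugate}.
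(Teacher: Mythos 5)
Your setup coincides with the paper's: transfer the lattice isometry to the map $\Phi_T$ of Proposition \ref{prop:44}, note that $\Phi_T$ and $\Phi_T^{-1}$ preserve $(1,weak)$-norms of tuples, and use the lemma on one-sided derivatives of $\sup_{|\varepsilon|=1}\|z^*+\varepsilon t w^*\|$ to obtain the modulus identity $|f_{z^*}(w^*)|=|f_{\Phi_T z^*}(\Phi_T w^*)|$. But the step you yourself flag as ``the heart of the argument'' and ``the main obstacle'' is never actually carried out: phrases like ``I would combine hyperplane preservation with positive homogeneity\dots to conclude that $\Phi_T$ is real-linear'' and ``which I expect to force'' are declarations of intent, not proofs. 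This is precisely where the paper does its real work: it recognizes the identity $|[w^*,z^*]|=|[\Phi_T w^*,\Phi_T z^*]|$ as a Wigner-type hypothesis for the semi-inner product $[w^*,z^*]=f_{z^*}(w^*)$ and invokes the theorem of Ili\v{s}evi\`{c} and Turn\v{s}ek \cite{IT2020} on Wigner's theorem in smooth normed spaces, which yields a unimodular function $\sigma:F^*\to\mathbb{C}$ and a linear \emph{or} conjugate-linear surjective isometry $V:F^*\to E^*$ with $\Phi_T z^*=\sigma(z^*)Vz^*$. Without this theorem (or a self-contained substitute for it), your proof has a genuine gap at its central point.

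Moreover, your intermediate target is stronger than what is true: you aim to show that $\Phi_T$ \emph{itself} is real-linear and that the phase ambiguity is a global sign, i.e. $J=\Phi_T\circ(i\,\cdot)\circ\Phi_T^{-1}=\pm(i\,\cdot)$. What the modulus identity actually yields is only linearity up to a \emph{pointwise} unimodular phase $\sigma(z^*)$, which cannot in general be removed --- that irremovable phase is the whole content of Wigner-type theorems --- so $\Phi_T$ need not be additive at all. This also affects your final descent: it is $V$, not $\Phi_T$, whose weak$^*$-to-weak$^*$ continuity on bounded sets must be established (the paper deduces it from the corresponding property of $\Phi_T$ together with $|\sigma(z^*)|=1$) before one can conclude, in the linear case, that $V$ is the adjoint of a surjective isometry $F\to E$, and in the conjugate-linear case, that $SV$ is such an adjoint, where $S:E^*\to\overline{E}^*$ is the canonical conjugation. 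Your Krein--Smulian argument is fine in spirit, but it must be applied to $V$ after the Wigner step, not to $\Phi_T$.
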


\begin{proof}
Let $T:\fbl_\mathbb{C}[E]\to \fbl_\mathbb{C}[F]$ be a surjective lattice isometry. We define the following semi-inner product on $E^*$ (resp. on $F^*$): for $z^*,w^*\in E^*$ (resp. on $F^*$), 
$$
[w^*,z^*]=\left\{\begin{array}{cc}
    0 & \text{ if }z^*=0, \\
    f_{z^*}(w^*) & \text{ if }z^*\neq0,
\end{array}\right.
$$
where $f_{z^*}\in E^{**}$ (resp. $F^{**})$ is the unique supporting functional at $z^*$.

By Proposition \ref{prop:44}, $T$ is the composition operator associated to a certain surjective positively homogeneous map $\Phi_T:F^*\to E^*$, $\text{weak}^*$ to $\text{weak}^*$ continuous on bounded sets, which preserves $(1,weak)$-norms of tuples; $\Phi_T^{-1}$ also has all these properties. Then, by the previous lemma we have
\begin{eqnarray*}
\bigl|[w^*,z^*]\bigr|&=&|f_{z^*}(w^*)|=\lim_{t\to 0^+}\frac{\sup_{|\varepsilon|=1}\|z^*+\varepsilon tw^*\|-\|z^*\|}{t} \\
&=&\lim_{t\to 0^+}\frac{\sup_{|\varepsilon|=1}\|\Phi_Tz^*+\varepsilon t\Phi_Tw^*\|-\|\Phi_Tz^*\|}{t} \\
&=&|f_{\Phi_T z^*}(\Phi_T w^*)|=\bigl|[\Phi_T w^*,\Phi_T z^*]\bigr|,
\end{eqnarray*}
for every $z^*,w^*\in E^*$, with $z^*\neq 0$. By \cite{IT2020}, there exist a map $\sigma:F^*\to \mathbb{C}$ with $|\sigma(z^*)|=1$ for all $z^*\in F^*$ and a linear or conjugate linear surjective isometry $V:F^*\to E^*$ such that 
$$
\Phi_T z^*= \sigma(z^*) Vz^*.
$$
Therefore, we have $Vz^*=\overline{\sigma(z^*)}\Phi_T z^*$ for all $z^*\in F^*$. 

Note that $V$ is $w^*$-$w^*$ continuous on $B_{F^*}$. Indeed, since $V$ is linear, it suffices to check that $Vz_\alpha^*\overset{w^*}{\to} 0$ for any $\{z_\alpha^*\}\subset B_{F^*}$ $\text{weak}^*$-convergent to zero. This can easily be deduced from the fact that $\Phi_T$ is $\text{weak}^*$ to $\text{weak}^*$ continuous on bounded sets and $|\sigma(z^*)|=1$ for every $z^*\in F^*$.

Let us denote by $S:E^*\to \overline{E}^*$ the map defined by 
$$
Sz^*(z)=\overline{z^*(z)}
$$ 
for all $z^*\in E^*$, $z\in \overline{E}$. It should be noted that $S$ is a conjugate-linear surjective isometry which is also $\text{weak}^*$ to $\text{weak}^*$ continuous.

If $V$ is linear, given that it is weak$^*$ to weak$^*$ continuous on $B_{F^*}$, then $V$ is the adjoint operator of a surjective isometry between $E$ and $F$. If $V$ is conjugate-linear, we may take the composition $SV$ which must be the adjoint operator of a surjective isometry between $\overline{E}$ and $F$.
\end{proof}

\begin{rem}
The preceding proposition cannot be extended to the lattice isomorphic case. In \cite{Anisca}, R. Anisca built a family of cardinality continuum of uniformly convex Banach spaces (and thus, with uniformly smooth duals) which are mutually non-isomorphic as complex Banach spaces even though they are real isometric. Therefore, the members of this collection have essentially (up to lattice isomorphism) the same free complex Banach lattice.
\end{rem}

More generally, one might consider the notion of \textit{complex structures}. Recall that a real Banach space $E$ is said \textit{to admit a complex structure} if there exists an operator $U:E\to E$ such that $U^2=\text{-Id}$ (see \cite[pp. 4-5]{Singer}). In this situation, we can put the following scalar multiplication on $E$:
$$
(a+ib)\cdot x=ax+b\,Ux, \qquad \text{ for every } x\in E \:\text{ and every } a,b\in\mathbb{R}.
$$
Thus, $E$ becomes a complex Banach space, denoted by $E^U$, when renormed with
\begin{equation}\label{eq:renorming-complex structure}
\vertiii{x}=\sup_{\theta\in  [0,2\pi]} \|\cos \theta x+\sin \theta \,Ux \|, \qquad \text{ for every } x\in E.    
\end{equation}

Observe that from the construction of the free complex Banach lattice described in the preceding section we can infer that if $E$ and $F$ are two complex Banach spaces which are $\mathbb{R}$-linearly isomorphic, then their corresponding $\fbl_\mathbb{C}[E]$ and $\fbl_\mathbb{C}[F]$ are complex lattice isomorphic. Indeed, since $\fbl[E_\mathbb{R}]$ and $\fbl[F_\mathbb{R}]$ are lattice isomorphic, then $(\fbl[E_\mathbb{R}])_\mathbb{C}$ and $(\fbl[F_\mathbb{R}])_\mathbb{C}$ are complex lattice isomorphic. In addition, equation (\ref{eq:relation-norms}) shows that $\fbl_\mathbb{C}[E]$ (resp. $\fbl_\mathbb{C}[F]$) is complex lattice isomorphic to $(\fbl[E_\mathbb{R}])_\mathbb{C}$ (resp. $(\fbl[F_\mathbb{R}])_\mathbb{C}$). 

It should be noted that if $U$, $V$ are complex structures on $E$, then their associated complex Banach spaces $E^U$, $E^V$ are isomorphic as real Banach spaces, given that their norms are equivalent to the original one defined on $E$ (recall expression (\ref{eq:renorming-complex structure})). Nevertheless, $E^U$, $E^V$ do not have to be $\mathbb{C}$-linearly isomorphic. In fact, with this terminology, the spaces constructed in \cite{Bourgain,Kalton:95} have more than one complex structure, whereas spaces with exactly $n$ non-equivalent complex structures were given in \cite{Ferenczi}  (see also \cite{Anisca, Cuellar} for the cases of continuum many and infinite countably many non-equivalent complex structures respectively). As a result, these provide examples of non-isomorphic complex Banach spaces whose corresponding free complex Banach lattices are lattice isomorphic. 

\section{Free complex vector lattices}\label{s:fvl}

The purpose of this section is to provide an alternative proof of the existence of $\fbl_{\mathbb C}[E]$, similar to the one given in \cite{Troitsky}. Although this argument is conceptually simpler it has the drawback that it does not provide the explicit description of $\fbl_{\mathbb C}[E]$ given in Section \ref{s:fblC}. To this end, we will need first to consider the concept of a free complex vector lattice.

Let us begin by recalling some definitions about complex vector lattices. A \textit{complex vector lattice} $X_\mathbb{C}=X\oplus iX$ is the complexification of a real vector lattice $X$ such that for every $x+iy\in X_\mathbb{C}$ we have that $|x+iy|\in X\subset X_\mathbb{C}$, where $|\cdot|$ stands for the \textit{modulus} function defined in the equation (\ref{eq:modulus}). Typically, the real vector lattice $X$ is assumed to be \textit{uniformly complete} (for instance, in \cite[Section 2.11]{Schaefer-book} or \cite[Section 2.2]{Meyer-Nieberg}) to ensure that the modulus is well-defined. Another equivalent way to define complex vector lattices may be found in \cite{MW,Vuza}, where an axiomatic definition of the modulus mapping on a vector lattice is given. We thank T. Oikhberg for bringing the latter reference to our attention. 

By a \textit{complex vector sublattice} $Y$ of $X_\mathbb{C}$ we mean a conjugation invariant (that is, $x-iy\in Y$ whenever $x+iy\in Y$) complex vector subspace such that $|z|\in Y$ whenever $z\in Y$. A $\mathbb{C}$-linear operator $T:X_\mathbb{C}\to Y_\mathbb{C}$ is said to be a \textit{lattice homomorphism} if it is the complexification of a lattice homomorphism $S:X\to Y$ or, equivalently, if $T$ preserves the modulus of the elements in $X_\mathbb{C}$.

Let $X_\mathbb{C}$ be a complex vector lattice and let $A$ be a non-empty subset of $X_\mathbb{C}$. The \textit{complex vector sublattice generated by $A$} in $X_\mathbb{C}$, which is represented by $\text{lat}_\mathbb{C}(A)$, is the smallest  complex vector sublattice of $X_\mathbb{C}$ which contains $A$. In the real case, we have a useful description of the elements of the sublattice generated by an arbitrary subset: every member of $\text{lat}(A)$ is a lattice-linear combination of a finite number of elements of $A$ \cite[Lemma 5.63]{AA-book}. The upcoming remark provides a description of the elements of $lat_\mathbb{C}(A)$.

\begin{rem}
Let $A$ be a non-empty subset of a complex vector lattice $X_\mathbb{C}=X\oplus iX$. First, we put 
$$
E_1=\text{lat}(\re(A)\cup \im(A))\subset X \quad \text{and}\quad F_1=\{|(x_1,x_2)|\::\: x_1,x_2\in E_1\}.
$$
Given $n\in\mathbb{N}$, such that $n\geq 2$, we define 
$$
E_n=\text{lat}(E_{n-1}\cup F_{n-1}) \quad \text{and}\quad F_n=\{|(x_1,x_2)|\::\: x_1,x_2\in E_n\}.
$$
In this way, we obtain an increasing sequence $\{E_n\}_{n=1}^\infty$ of sublattices of $X$. It is straightforward to check that $E=\cup_{n=1}^\infty E_n$ is a sublattice of $X$ and 
$$
\text{lat}_\mathbb{C}(A)=E\oplus iE.
$$
\end{rem}

We can define an analogous concept of \textit{free vector lattice over a set} (see \cite[Definition 3.1]{dePW}) in the complex setting.

\begin{defn}
If $A$ is any non-empty set, \textit{the free complex vector lattice} over $A$ is a pair $(\fvl_\mathbb{C}(A),\iota)$, where $\fvl_\mathbb{C}(A)$ is a complex vector lattice and $\iota:A\to \fvl_\mathbb{C}(A)$ is a map, with the following universal property: for any complex vector lattice $V_\mathbb{C}$ and any map $T:A\to V_\mathbb{C}$, there exists a unique complex vector lattice homomorphism $\hat{T}:\fvl_\mathbb{C}(A)\to V_\mathbb{C}$ such that $\hat{T}\circ \iota= T$, i.e the following diagram commutes
$$
	\xymatrix{\fvl_\mathbb{C}(A)\ar@{-->}[rrd]^{\hat{T}}\\
	A\ar[u]^{\iota}\ar[rr]^T&&V_\mathbb{C}.}
$$
\end{defn}

It is not difficult to see that if such an object exists, then it is esentially unique up to complex vector lattice isomorphism and this justifies why we refer to it as ``the" free complex vector lattice over $A$. The following proposition ensures the existence of this object. The proof makes use of the free (real) vector lattice over a set $S$, which is denoted $\fvl[S]$ (cf. \cite{Baker,Bleier}).

\begin{prop}
For any non-empty set $A$, $\fvl_\mathbb{C}(A)$ exists.
\end{prop}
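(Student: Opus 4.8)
The plan is to realize $\fvl_{\mathbb C}(A)$ as a concrete complex vector sublattice built from a single real free vector lattice, namely the one over two disjoint copies of $A$. Put $S=A\times\{1,2\}$ and realize $\fvl[S]$ in the usual way as the real vector sublattice of $\mathbb{R}^{\mathbb{R}^S}$ generated by the coordinate evaluations $\delta_s$, $s\in S$. Since $\mathbb{R}^{\mathbb{R}^S}$ is Dedekind complete, its complexification $\mathbb{C}^{\mathbb{R}^S}$ is a genuine complex vector lattice under the pointwise modulus. Inside it I would set
$$\iota(a)=\delta_{(a,1)}+i\,\delta_{(a,2)}, \qquad a\in A,$$
and define $\fvl_{\mathbb C}(A):=\text{lat}_\mathbb{C}\big(\{\iota(a):a\in A\}\big)$, the complex vector sublattice generated by these elements. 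Being a complex vector sublattice of a complex vector lattice, $\fvl_{\mathbb C}(A)$ is itself a complex vector lattice, and by the preceding remark its real part is the real sublattice $R\subseteq\mathbb{R}^{\mathbb{R}^S}$ obtained from $\{\delta_s:s\in S\}$ by alternately applying lattice and linear operations and the modulus map $(f,g)\mapsto|f+ig|$; in particular $\fvl[S]\subseteq R$ and $\{\delta_s\}$ generates $R$ as a modulus-closed real sublattice.

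For the universal property, fix a complex vector lattice $V_\mathbb{C}=V\oplus iV$ and a map $T=T_1+iT_2\colon A\to V_\mathbb{C}$. I would first feed the pair $(T_1,T_2)$ into the real free vector lattice: the assignment $\phi(a,1)=T_1(a)$, $\phi(a,2)=T_2(a)$ defines $\phi\colon S\to V$, and the universal property of $\fvl[S]$ yields a unique real lattice homomorphism $\Psi\colon\fvl[S]\to V$ with $\Psi(\delta_{(a,j)})=\phi(a,j)$. The goal is then to extend $\Psi$ to a lattice homomorphism $\hat\Psi\colon R\to V$ compatible with the modulus, that is $\hat\Psi(|f+ig|)=|\Psi f+i\Psi g|$ for $f,g\in\fvl[S]$ (the right-hand modulus computed in $V_\mathbb{C}$), and to take $\hat T:=(\hat\Psi)_\mathbb{C}$. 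Granting such an extension, $\hat T$ is a complex lattice homomorphism, and $\hat T(\iota(a))=\Psi(\delta_{(a,1)})+i\Psi(\delta_{(a,2)})=T_1(a)+iT_2(a)=T(a)$, so $\hat T\circ\iota=T$. Uniqueness is the easy half: any complex lattice homomorphism is the complexification of its real part, two candidates agreeing on $\iota(A)$ have real parts agreeing on $\{\delta_s:s\in S\}$, and since these generate $R$ as a modulus-closed sublattice the two real parts, both modulus-compatible, coincide on all of $R$.

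The one substantial point, and the step I expect to be the main obstacle, is the well-definedness of $\hat\Psi$ on $R$: an element of $R$ is represented by a finite expression in the $\delta_s$ using lattice, linear and modulus operations, and I must check that the value obtained by carrying this expression over to $V$ (with $|\cdot|$ interpreted in $V_\mathbb{C}$) depends only on the element of $R$, not on the chosen expression. The clean way to secure this is the homogeneous functional calculus in vector lattices: every element of $R$ has the form $\xi\mapsto\varphi(\xi(s_1),\dots,\xi(s_k))$ for a continuous positively homogeneous $\varphi\colon\mathbb{R}^k\to\mathbb{R}$ lying in the lattice-and-modulus closure of the coordinates, and the map $\varphi\mapsto\varphi\big(\phi(s_1),\dots,\phi(s_k)\big)$, evaluated in $V$ by functional calculus, is a lattice homomorphism depending only on $\varphi$ as a function and intertwining the modulus on both sides. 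This is precisely where the completeness and modulus structure of $V_\mathbb{C}$ enters (the functional calculus must make sense of $\sqrt{x^2+y^2}$-type expressions in $V$), and it is the complex analogue of the piecewise-linear functional calculus underlying the real construction in \cite{Troitsky}; once it is in place, $\hat\Psi$ is a well-defined modulus-compatible lattice homomorphism extending $\Psi$, and the argument closes.
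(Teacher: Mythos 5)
Your construction follows essentially the same route as the paper's own proof: both realize $\fvl_\mathbb{C}(A)$ as the complex sublattice generated by elements of the form ``real generator $+\,i\,$imaginary generator'' inside the complexification of a concrete real free vector lattice over a doubled index set, both obtain the key homomorphism by feeding the real and imaginary parts of $T$ into the real universal property and then complexifying, and both settle uniqueness by the same generation argument. Two differences deserve comment. First, the indexing: the paper works with $\fvl(A\times A)$, defines $S(x,y)=\frac{1}{2}(\re Tx+\re Ty-\im Tx+\im Ty)$ and $\iota(x)=\eta_{(x,x)}+i\eta_{(-x,x)}$, formulas which literally presuppose that $-x$ is an element of the abstract set $A$ and that $T(-x)=-T(x)$; your disjoint-union index set $A\times\{1,2\}$ with $\delta_{(a,1)}\mapsto \re T(a)$, $\delta_{(a,2)}\mapsto \im T(a)$ achieves the same end without this defect and is the cleaner bookkeeping. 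Second, the step you single out as the main obstacle --- extending $\Psi$ from $\fvl[S]$ to a modulus-compatible homomorphism on the modulus-closure $R$ --- is precisely the point the paper passes over in silence: its map $\hat{S}_\mathbb{C}$ is defined only on $\fvl(A\times A)\oplus i\,\fvl(A\times A)$, while the complex sublattice generated by $\iota(A)$ (built, as in the paper's own preceding remark, by alternately taking sublattices and moduli) necessarily contains functions such as $\sqrt{f^2+g^2}$ that are not piecewise linear and hence lie outside that domain. So the well-definedness lemma you defer to a homogeneous functional calculus --- that evaluating a lattice-linear-modulus expression at the images $\Psi(\delta_s)$ in $V$, reading square roots via the modulus of $V_\mathbb{C}$, depends only on the function the expression defines on $\mathbb{R}^S$ --- is a genuine missing step, but it is missing from the paper's argument as well, not something your approach introduced; it is standard when the real part of $V_\mathbb{C}$ is uniformly complete, and granting it your proof is complete and, if anything, more careful than the paper's.
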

\begin{proof}
Given $(x,y)\in A\times A$, we define $\eta_{(x,y)}(f)=f(x,y)$ for every $f\in \mathbb{R}^{A\times A}$. We recall that $\fvl(A\times A)$ is the sublattice generated by $\{\eta_{(x,y)}\::\: (x,y)\in A\times A\}$ in $\mathbb{R}^{A\times A}$ (see \cite[Theorem 3.6]{dePW}). 

Let $V_\mathbb{C}=V\oplus iV$ be a complex vector lattice and $T:A\to V_\mathbb{C}$ a map. Let us consider the following map:
$$
\begin{array}{cccl}
		S: &  A\times A & \longrightarrow & V  \\
		& (x,y)&\longmapsto & \frac{1}{2}\bigl(\mathfrak{Re}Tx+ \mathfrak{Re}Ty -\mathfrak{Im}Tx+ \mathfrak{Im}Ty\bigr).
\end{array}
$$	
By the universal property of $\fvl(A\times A)$, there is a unique lattice homomorphism $\hat{S}:\fvl(A\times A)\to V$ such that $\hat{S}\circ \eta=S$. 

Now, we define a function $\iota:A\to \fvl(A\times A)_\mathbb{C}$ by 
$$
\iota(x)=\eta_{(x,x)}+i\eta_{(-x,x)}
$$ 
for every $x\in A$. The complex lattice homomorphism $\hat{S}_\mathbb{C}:\fvl(A\times A)_\mathbb{C}\to V_\mathbb{C}$ given by $\hat{S}_\mathbb{C}(f+ig)=\hat{S}f+i\hat{S}g$, for every $f,g\in \fvl(A\times A)$, extends the map $T$. Indeed, given $x\in A$, we have
$$
\hat{S}_\mathbb{C}\circ \iota(x)=\hat{S}\eta_{(x,x)}+i\hat{S}\eta_{(-x,x)}=S(x,x)+iS(-x,x)=\mathfrak{Re}Tx+i\mathfrak{Im}Tx=Tx.
$$
Therefore, the complex vector sublattice generated by $\{\iota(x)\::\: x\in A\}$ in $\fvl(A\times A)_{\mathbb C}$ is $\fvl_\mathbb{C}(A)$. Observe that the later condition guarantees the uniqueness of the complex lattice homomorphism which extends $T$. 
\end{proof}

We can also provide a complex version of the notion of \textit{free Banach lattice over a set} introduced by B. De Pagter and A. Wickstead \cite[Definition 4.1]{dePW}.

\begin{defn}
The \textit{free complex Banach lattice} over a non-empty \textit{set} $A$ is a pair $(\fbl_\mathbb{C}(A),\delta_A)$, where $\fbl_\mathbb{C}(A)$ is a complex Banach lattice and $\delta_A:A\to \fbl_\mathbb{C}(A)$ is a bounded map, with the property that for any complex Banach lattice $X_\mathbb{C}$ and any bounded map $T:A\to X_\mathbb{C}$ there is a unique complex vector lattice homomorphism $\hat{T}:\fbl_\mathbb{C}(A)\to X_\mathbb{C}$ such that $\hat{T}\circ \delta_A=T$ and $\|\hat{T}\|=\sup\{\|T(a)\|\,:\, a\in A\}$, i.e. the following diagram commutes
$$
	\xymatrix{\fbl_\mathbb{C}(A)\ar@{-->}[rrd]^{\hat{T}}\\
	A\ar[u]^{\delta_A}\ar[rr]^T&&X_\mathbb{C}}
$$
As usual, if such an object exists, then it is essentially unique up to lattice isometric isomorphism.
\end{defn}

The existence of the free Banach lattice over a set was proved in \cite[Theorem 4.7]{dePW} (the real version of the concept defined above). V. Troitsky found a simpler proof of this fact in \cite{Troitsky}, which is not difficult to adjust to the complex case. We omit the proof of the following proposition on account of it can be proved in a very similar fashion to \cite[Theorem 2.1]{Troitsky}.

\begin{prop}
Let $A$ be a non-empty set and let $\fvl_\mathbb{C}(A)=F\oplus iF$ be the free complex vector lattice over $A$. There exists a maximal lattice seminorm $\nu$ on $F$ with $\nu(|\iota(a)|)\leq 1$ for all $a\in A$. It is a lattice norm and the completion of $\fvl_\mathbb{C}(A)$ respect to $\nu(|\cdot |)$ is $\fbl_\mathbb{C}(A)$.
\end{prop}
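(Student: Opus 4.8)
The plan is to follow the strategy of \cite[Theorem 2.1]{Troitsky} for the real free Banach lattice over a set, working on the real part $F$ of $\fvl_\mathbb{C}(A)=F\oplus iF$ and then complexifying. Let $\mathcal{N}$ be the collection of all lattice seminorms $\rho$ on $F$ with $\rho(|\iota(a)|)\leq 1$ for every $a\in A$, and set $\nu(g)=\sup\{\rho(g):\rho\in\mathcal{N}\}$ for $g\in F$. The first task is to check that this supremum is finite. By the description of $\text{lat}_\mathbb{C}(A)$ recalled above, every $g\in F$ is obtained from finitely many of the elements $\re\iota(a)$, $\im\iota(a)$ by finitely many applications of lattice-linear operations and of the modulus; since for any $\rho\in\mathcal{N}$ one has $\rho(\re\iota(a))\leq\rho(|\iota(a)|)\leq 1$ and $\rho(\im\iota(a))\leq\rho(|\iota(a)|)\leq 1$ (because $|\re w|\leq|w|$ and $|\im w|\leq|w|$ in any complex lattice), and since a lattice seminorm satisfies $\rho(|(x_1,x_2)|)\leq\rho(x_1)+\rho(x_2)$, an induction on the length of the defining expression yields a bound $\rho(g)\leq C_g$ with $C_g$ independent of $\rho$. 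Thus $\nu$ is finite; being a pointwise supremum of lattice seminorms it is itself a lattice seminorm, it lies in $\mathcal{N}$ (as $\nu(|\iota(a)|)=\sup_\rho\rho(|\iota(a)|)\leq 1$), and by construction it dominates every member of $\mathcal{N}$. Hence $\nu$ is the maximal lattice seminorm subject to $\nu(|\iota(a)|)\leq 1$.

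Next I would show $\nu$ is in fact a norm. Recall that $\fvl(A\times A)$ is realised concretely as a sublattice of real-valued functions, so that $\fvl_\mathbb{C}(A)$ is a concrete complex vector lattice on which the point evaluations---equivalently, the complex lattice homomorphisms into $\mathbb{C}$---separate points. Fix $g\in F$ with $g\neq 0$. I would choose a separating homomorphism $\psi:\fvl_\mathbb{C}(A)\to\mathbb{C}$ with $\psi(g)\neq 0$ that is induced by a \emph{bounded} map $T=\psi\circ\iota:A\to\mathbb{C}$; this is possible because the evaluation can be taken at a finitely supported point, which makes $T$ finitely supported and hence bounded. Writing $\psi=S_\mathbb{C}$ with $S:F\to\mathbb{R}$ a real lattice homomorphism, the map $\rho(h)=|S(h)|$ is a lattice seminorm on $F$ with $\rho(|\iota(a)|)=|\psi(\iota(a))|=|T(a)|\leq c:=\sup_{a}|T(a)|<\infty$; thus $c^{-1}\rho\in\mathcal{N}$ and $\nu(g)\geq c^{-1}|\psi(g)|>0$. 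Therefore $\nu$ is a lattice norm on $F$, the expression $\nu(|\cdot|)$ is a complex lattice norm on $F\oplus iF$, and completing the real Banach lattice $(F,\nu)$ and complexifying produces the complex Banach lattice that will be $\fbl_\mathbb{C}(A)$, with $\delta_A=\iota$ regarded as valued in the completion (so $\|\delta_A(a)\|=\nu(|\iota(a)|)\leq 1$).

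Finally I would verify the universal property, which is where the maximality of $\nu$ is used. Given a complex Banach lattice $X_\mathbb{C}=X\oplus iX$ and a bounded map $T:A\to X_\mathbb{C}$ with $c:=\sup_{a}\|T(a)\|$, the universal property of $\fvl_\mathbb{C}(A)$ gives a unique complex lattice homomorphism $\hat T=S_\mathbb{C}$ extending $T$, where $S:F\to X$ is a real lattice homomorphism. The seminorm $h\mapsto\|S h\|_X$ is a lattice seminorm on $F$ whose value at $|\iota(a)|$ is $\||\hat T\iota(a)|\|=\|T(a)\|\leq c$, so $c^{-1}\|S(\cdot)\|_X\in\mathcal{N}$ and hence $\|Sh\|_X\leq c\,\nu(h)$ for all $h\in F$. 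For $f=h_1+ih_2\in F\oplus iF$ this gives $\|\hat T f\|_{X_\mathbb{C}}=\|S|f|\|_X\leq c\,\nu(|f|)=c\,\|f\|$, so $\hat T$ extends to a lattice homomorphism on the completion $\fbl_\mathbb{C}(A)$ with $\|\hat T\|\leq c$; combined with $\|\hat T\|\geq\|\hat T\delta_A(a)\|=\|T(a)\|$ (using $\|\delta_A(a)\|\leq 1$) this yields $\|\hat T\|=c$, and uniqueness follows since $\{\iota(a)\}$ generates a dense complex sublattice. The step I expect to require the most care is proving that $\nu$ is genuinely a norm: one must produce separating lattice homomorphisms that simultaneously respect the normalisation $\rho(|\iota(a)|)\leq 1$ (equivalently, that arise from \emph{bounded} maps into $\mathbb{C}$), which is exactly where the finite support of the chosen evaluation and the complex modulus in the constraint must be reconciled.
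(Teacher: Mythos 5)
Your proposal is correct and takes essentially the same approach as the paper: the paper omits this proof precisely because it is a routine adaptation of \cite[Theorem 2.1]{Troitsky}, and your argument---defining $\nu$ as the supremum over the family $\mathcal{N}$, proving finiteness via the recursive generation of $F$ from $\re \iota(a)$, $\im \iota(a)$ by lattice-linear operations and moduli, obtaining the norm property from finitely supported point evaluations in the concrete function realisation, and deriving the universal property from maximality---is exactly that adaptation, complexified in the natural way. The only points worth polishing are the harmless degenerate case $c=0$ in your norm argument (it cannot occur: if every generator involved in $g$ vanished at the chosen evaluation point, then $\psi(g)=0$) and the standard fact that the completion of the normed lattice $(F,\nu)$ is a real Banach lattice whose complexification carries the continuous lattice-homomorphic extension of $\hat{T}$.
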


The existence of $\fbl_\mathbb{C}(A)$ can also be deduced readily from the existence of the free complex Banach lattice generated by a complex Banach space because $\fbl_\mathbb{C}[\ell_1(A)_\mathbb{C}]$ turns out to be $\fbl_\mathbb{C} (A)$   (compare with \cite[Corollary 2.9]{ART}).

In \cite{Troitsky}, V. Troitsky also provides a description of the free Banach lattice over a real Banach space. The next proposition is an adaptation of Troitsky's construction \cite[Theorem 3.1]{Troitsky} to the complex case and it can be proved in a very similar fashion.
\begin{prop}
Let $E$ be a complex Banach space. Let $L_\mathbb{C}=L\oplus iL$ the complex vector sublattice of $\mathbb{R}^{(E_\mathbb{R})^*}\oplus i\mathbb{R}^{(E_\mathbb{R})^*}$ generated by $\{\delta_E(x)\::\: x\in E\}$. There exists a maximal lattice seminorm $\nu$ on $L$ such that $\nu (|\delta_E(x)|) \leq \|x\|$ for every $x\in L$.  The function $\nu$ is a lattice norm and the norm completion of $L_\mathbb{C}$ respect to $\nu(|\cdot |)$ is $\fbl_\mathbb{C}[E]$.
\end{prop}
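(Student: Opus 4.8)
The plan is to realize $\nu$ as the largest lattice seminorm compatible with the bound on the generators and then identify it with the restriction of the free norm already constructed in Section \ref{s:fblC}. Write $\nu_0$ for the restriction to $L$ of the complex lattice norm $\|\cdot\|_{\fbl_\mathbb{C}[E]}$. Before anything else I would record two structural facts about $L_\mathbb{C}$ sitting inside $\mathbb{R}^{(E_\mathbb{R})^*}\oplus i\mathbb{R}^{(E_\mathbb{R})^*}$. Since $\re \delta_E(z)=\delta_{E_\mathbb{R}}(z)$ and $\im \delta_E(z)=-\delta_{E_\mathbb{R}}(iz)$, and $iz$ ranges over all of $E$, the real part $L$ contains $\text{lat}\{\delta_{E_\mathbb{R}}(x):x\in E\}$, which is dense in $\fbl[E_\mathbb{R}]$; conversely $L\subset \fbl[E_\mathbb{R}]$, because $\fbl[E_\mathbb{R}]$ is a Banach lattice, hence uniformly complete, and so is closed under the modulus $|x_1+ix_2|=\sup_{\theta}(x_1\cos\theta+x_2\sin\theta)$ of (\ref{eq:modulus}) used to build the sets $F_n$ in the description of $\text{lat}_\mathbb{C}$. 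Thus $L$ is dense in $\fbl[E_\mathbb{R}]$, $L_\mathbb{C}$ is dense in $\fbl_\mathbb{C}[E]$, and $\nu_0$ is a lattice norm on $L$ with $\nu_0(|\delta_E(z)|)=\|\delta_E(z)\|_{\fbl_\mathbb{C}[E]}=\|z\|$ by Lemma \ref{l:delta}.

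Next I would set $\nu(h)=\sup\rho(h)$, the supremum ranging over all lattice seminorms $\rho$ on $L$ satisfying $\rho(|\delta_E(z)|)\le\|z\|$ for every $z\in E$. The crucial step, and the one I expect to be the main obstacle, is to prove this supremum is finite, equivalently the uniform domination $\rho\le\nu_0$. For this I would extend $\rho$ to the complex seminorm $\tilde\rho(f+ig)=\rho(|f+ig|)$ on $L_\mathbb{C}$, quotient by its kernel and complete, obtaining a complex Banach lattice $X_\rho$ together with the quotient lattice homomorphism $q\colon L_\mathbb{C}\to X_\rho$. Then $z\mapsto q(\delta_E(z))$ is $\mathbb{C}$-linear of norm $\le 1$, so the universal property of $\fbl_\mathbb{C}[E]$ yields a norm-$\le 1$ lattice homomorphism $\widehat{q}\colon \fbl_\mathbb{C}[E]\to X_\rho$ extending it. Being complex lattice homomorphisms agreeing on the generators $\delta_E(E)$, the maps $\widehat{q}$ and $q$ coincide on $L_\mathbb{C}=\text{lat}_\mathbb{C}\{\delta_E(z)\}$, whence $\tilde\rho(w)=\|q(w)\|_{X_\rho}=\|\widehat{q}(w)\|_{X_\rho}\le\|w\|_{\fbl_\mathbb{C}[E]}$ for all $w\in L_\mathbb{C}$; restricting to real $w=h$ gives $\rho(h)\le\nu_0(h)$. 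Since $\nu_0$ itself lies in the family, I conclude $\nu=\nu_0$: in particular $\nu$ is finite, it is a lattice seminorm (being an everywhere-finite pointwise supremum of lattice seminorms), it is maximal by construction, and $\nu(|\delta_E(z)|)=\|z\|$.

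Normality and the identification of the completion then come for free. The seminorm $\nu=\nu_0$ is a norm because it is the restriction of a norm; alternatively one argues directly that for $\|x^*\|\le 1$ the evaluation $\rho_{x^*}(h)=|h(x^*)|$ belongs to the family, using that every $x^*\in (E_\mathbb{R})^*$ equals $\re z^*$ for some $z^*\in E^*$ with $\|z^*\|=\|x^*\|$, so that $|\delta_E(z)|(x^*)=|z^*(z)|\le\|z\|$, and that these evaluations separate the positively homogeneous functions of $L$, forcing $\nu(h)\ge\sup_{\|x^*\|\le1}|h(x^*)|>0$ for $h\ne 0$. Finally, since $\nu(|\cdot|)=\|\cdot\|_{\fbl_\mathbb{C}[E]}$ on $L_\mathbb{C}$ and $L_\mathbb{C}$ is dense in $\fbl_\mathbb{C}[E]$, the completion of $(L_\mathbb{C},\nu(|\cdot|))$ is exactly the closure of $L_\mathbb{C}$ in $\fbl_\mathbb{C}[E]$, namely $\fbl_\mathbb{C}[E]$.

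I would close with a remark that this argument can be made into a genuinely self-contained existence proof, bypassing Section \ref{s:fblC}: rather than invoking $\fbl_\mathbb{C}[E]$ to bound the $\rho$'s, one checks directly that the completion of $(L_\mathbb{C},\nu(|\cdot|))$ has the universal property. Given $T\colon E\to X_\mathbb{C}$ with $\|T\|\le 1$, the real universal property of $\fbl[E_\mathbb{R}]$ applied to $\re T\colon E_\mathbb{R}\to X$ produces a lattice homomorphism $S\colon \fbl[E_\mathbb{R}]\to X$ with $S\delta_{E_\mathbb{R}}=\re T$; its complexification restricted to $L_\mathbb{C}$ extends $T$, and since $h\mapsto\|Sh\|_X$ is a lattice seminorm lying in the defining family it is $\le\nu$, so this extension is $\nu(|\cdot|)$-contractive and passes to the completion.
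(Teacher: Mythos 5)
Your main argument is correct, but it takes a genuinely different route from the one the paper intends. The paper omits the proof and points to Troitsky's construction \cite[Theorem 3.1]{Troitsky}: the intended argument is self-contained, in that it never presupposes the object $\fbl_\mathbb{C}[E]$ built in Section \ref{s:fblC}. In that approach, finiteness of $\nu=\sup_\rho\rho$ comes from a structural estimate (by induction on the generation of $L_\mathbb{C}$, every $h\in L$ satisfies $|h|\leq\sum_i\lambda_i|\delta_E(x_i)|$ for finitely many $x_i\in E$, using $|\re w|\leq |w|$ and $|w_1+iw_2|\leq |w_1|+|w_2|$, so $\rho(h)\leq\sum_i\lambda_i\|x_i\|$ uniformly in $\rho$), and the completion is shown to be free by verifying the universal property directly: a lattice homomorphism $L_\mathbb{C}\to X_\mathbb{C}$ extending a contraction $T$ is produced (for instance by complexifying the real extension of $\re T$, as in your own final remark), and the seminorm $h\mapsto\|Sh\|_X$ lies in the defining family, hence is dominated by $\nu$. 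You instead anchor everything to Section \ref{s:fblC}: each admissible $\rho$ is dominated by the restriction $\nu_0$ of $\|\cdot\|_{\fbl_\mathbb{C}[E]}$ by passing to the quotient Banach lattice $X_\rho$ and invoking the universal property of $\fbl_\mathbb{C}[E]$, whence $\nu=\nu_0$, and density of $L_\mathbb{C}$ gives the completion statement. This is a valid proof of the proposition as stated, and it even yields a sharper conclusion (the maximal seminorm \emph{is} the free norm on $L$, not merely that the completions agree); your density argument, including the point that $L\subset\fbl[E_\mathbb{R}]$ because uniform completeness makes $\fbl[E_\mathbb{R}]$ closed under the modulus (\ref{eq:modulus}), is sound. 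What your route cannot do is serve the stated purpose of Section \ref{s:fvl}, namely give an \emph{alternative} existence proof of $\fbl_\mathbb{C}[E]$, since it is parasitic on the Section \ref{s:fblC} construction. Your closing remark is meant to repair this, but as written it has a gap: verifying the universal property of the completion does not by itself make the construction self-contained, because one must first know that $\nu$ is finite (otherwise it is not a seminorm and the completion is undefined), and without $\fbl_\mathbb{C}[E]$ available this requires precisely the Troitsky-type bound above --- the one ingredient your main proof replaced by an appeal to the universal property.
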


\section{Spectra}\label{s:spectra}

Given an endomorphism on a complex Banach space $T:E\rightarrow E$, let us denote $\overline T:\fbl_\mathbb C[E]\rightarrow \fbl_\mathbb C[E]$ the unique lattice homomorphism given by the universal property which makes the following diagram commutative
$$
	\xymatrix{\fbl_\mathbb C[E]\ar[rr]^{\overline T}&&\fbl_\mathbb C[E]\\
	E\ar[u]_{\delta_E}\ar^{T}[rr]&& E\ar[u]_{\delta_E}}
$$
and also satisfies $\|\overline T\|=\|T\|$. Note that in this way we can associate a lattice homomorphism to each bounded linear operator. Our aim in this section is to collect some observations concerning spectral theory via this correspondence. 

As usual for an operator $T:X\rightarrow X$ on a complex Banach space we denote its spectrum $\sigma(T)$ as the (non-empty) compact set consisting of those $\lambda\in\mathbb C$ such that $\lambda-T$ is not invertible (in other words, $\lambda-T$ is not a surjective isomorphism). Let $r(T)=\max\{|\lambda|:\lambda\in\sigma(T)\}$ denote the corresponding spectral radius. As usual $\sigma_p(T)$ denotes the set of eigenvalues (or point spectrum) of $T$ and $$
\begin{array}{ccl}
    \sigma_a(T)&=&\{\lambda\in\sigma(T): \lambda-T\textrm{ is not bounded below}\},  \\
    \sigma_r(T)&=&\{\lambda\in\sigma(T)\backslash \sigma_p(T): \overline{(\lambda-T)(X)}\neq X\}, \\
    \sigma_c(T)&=&\{\lambda\in\sigma(T)\backslash \sigma_p(T): \overline{(\lambda-T)(X)}= X\}
\end{array} 
$$
denote respectively the approximate point spectrum, the residual spectrum and the continuous spectrum. Recall that 
$$
\sigma_p(T)\subseteq \sigma_a(T)\subseteq \sigma(T)\quad\text{and}\quad\sigma(T)=\sigma_p(T)\cup \sigma_r(T)\cup \sigma_c(T),
$$ 
the latter being a disjoint union.

\begin{prop}\label{p:spectra}
Let $T:E\rightarrow E$ be a bounded linear operator and $\overline T:\fbl_\mathbb C[E]\rightarrow \fbl_\mathbb C[E]$ the associated lattice homomorphism.
\begin{enumerate}
\item $\sigma_a(T)\subset \sigma_a(\overline T)$ and $\sigma_p(T)\subset \sigma_p(\overline T)$. 
\item $0\in\sigma(T)$ if and only if $0\in \sigma(\overline T)$.
\item The spectral radii satisfy $r(\overline T)=r(T)$.
\end{enumerate}
\end{prop}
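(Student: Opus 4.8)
The plan rests on two structural facts about the correspondence $T\mapsto\overline T$, both inherited from the universal property exactly as in the real case. First, the correspondence is functorial and unital: for composable operators one has $\overline{S\circ T}=\overline S\circ\overline T$ (indeed $\overline S\circ\overline T$ is a lattice homomorphism extending $S\circ T$, so uniqueness applies), and $\overline{\mathrm{Id}_E}=\mathrm{Id}_{\fbl_\mathbb{C}[E]}$. Second, it intertwines through the isometric embedding, $\overline T\delta_E=\delta_E T$, together with $\|\overline T\|=\|T\|$. From the first display in the proof of Lemma \ref{l:delta} I will also use repeatedly that $\delta_E(z)(\re z^*)=z^*(z)$.

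For part (1), I would observe that the intertwining relation gives $(\lambda-\overline T)\delta_E x=\delta_E\big((\lambda-T)x\big)$ for every $x\in E$ and every $\lambda\in\mathbb{C}$. Thus if $x$ is an eigenvector of $T$ for $\lambda$, then $\delta_E x\neq 0$ (as $\delta_E$ is isometric) is an eigenvector of $\overline T$ for $\lambda$, giving $\sigma_p(T)\subset\sigma_p(\overline T)$. Likewise, if $\lambda\in\sigma_a(T)$ and $(x_n)\subset B_E$ satisfies $(\lambda-T)x_n\to 0$, then the $\delta_E x_n$ are unit vectors with $(\lambda-\overline T)\delta_E x_n=\delta_E\big((\lambda-T)x_n\big)\to 0$, so $\lambda-\overline T$ is not bounded below and $\lambda\in\sigma_a(\overline T)$. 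In both cases membership in the full spectrum is automatic, since an operator that is not bounded below cannot be invertible.

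Part (3) is the quickest: by functoriality $\overline T^{\,n}=\overline{T^n}$, so the norm identity yields $\|\overline T^{\,n}\|=\|\overline{T^n}\|=\|T^n\|$ for all $n$, and the Gelfand spectral radius formula $r(S)=\lim_n\|S^n\|^{1/n}$ (valid on any complex Banach space, and $\fbl_\mathbb{C}[E]$ is one) gives $r(\overline T)=\lim_n\|\overline T^{\,n}\|^{1/n}=\lim_n\|T^n\|^{1/n}=r(T)$.

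Part (2) is where the real work lies. The implication $0\notin\sigma(T)\Rightarrow 0\notin\sigma(\overline T)$ is immediate: if $T$ is invertible, then by functoriality and unitality $\overline{T^{-1}}$ is a two-sided inverse of $\overline T$, so $\overline T$ is invertible. The reverse implication is the main obstacle, since one must pass from invertibility of $\overline T$ on all of $\fbl_\mathbb{C}[E]$ back to invertibility of $T$ on $E$, and a priori a preimage under $\overline T$ of an element $\delta_E y$ need not lie in $\delta_E(E)$. I would resolve this through the composition-operator description of Proposition \ref{prop:44}: a direct computation using $\overline T\delta_E=\delta_E T$ together with $\delta_E(w)(\re z^*)=z^*(w)$ shows that the associated map satisfies $\Phi_{\overline T}z^*(w)=z^*(Tw)$, that is, $\Phi_{\overline T}=T^*$. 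Now if $\overline T$ is invertible it is a bijective lattice homomorphism, hence a lattice isomorphism, so by the remark following Proposition \ref{prop:44} the map $\Phi_{\overline T}=T^*$ is bijective; by the open mapping theorem $T^*$ is then invertible, whence so is $T$. This closes the equivalence, and I expect the identification $\Phi_{\overline T}=T^*$ to be the conceptual crux of the argument.
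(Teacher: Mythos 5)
Your proof is correct; parts (1) and (3), as well as the forward implication of (2), follow the paper's proof essentially verbatim (the paper phrases the $\sigma_a$-inclusion contrapositively, but the computation is identical). The genuine divergence is in the converse of (2). The paper stays inside $\fbl_\mathbb{C}[E]$: assuming $\overline T$ invertible, it first shows $\overline T(\delta_E(E))=\delta_E(E)$ --- the image is closed because $\overline T$ is an isomorphism, and were it a proper subspace of $\delta_E(E)$, a Hahn--Banach functional $z^*$ vanishing on $\delta_E^{-1}(\overline T\delta_E(E))$ would trap the range of the lattice homomorphism $\overline T$ inside the closed sublattice generated by $\overline T(\delta_E(E))$, which lies in $\{f:f(\re z^*)=0\}$, contradicting surjectivity --- and then checks that $\overline T^{-1}$ also maps $\delta_E(E)$ into $\delta_E(E)$, so that $\delta_E^{-1}\overline T^{-1}\delta_E$ inverts $T$. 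You instead pass to the dual: your identification $\Phi_{\overline T}=T^*$ is correct (it follows exactly as you say from $\overline T\delta_E=\delta_E T$ and $\delta_E(w)(\re z^*)=z^*(w)$), the remark after Proposition \ref{prop:44} gives bijectivity of $T^*$, and standard duality ($T^*$ surjective $\Rightarrow$ $T$ bounded below; $T^*$ injective $\Rightarrow$ $T$ has dense range) yields invertibility of $T$. Both routes are valid. The paper's argument is self-contained, using only Hahn--Banach and the lattice-homomorphism property of $\overline T$; yours is shorter and isolates the conceptually pleasant fact $\Phi_{\overline T}=T^*$, but it leans on Proposition \ref{prop:44} and its subsequent remark, which the paper states without proof (deferring to the real-case arguments of \cite{LaTra,OTTT}), and on the adjoint-invertibility criterion, which merits at least a one-line justification. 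One cosmetic slip in (1): you take $(x_n)\subset B_E$ and then call the $\delta_E x_n$ unit vectors; you should take $\|x_n\|=1$, which the definition of the approximate point spectrum permits.
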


\begin{proof}
(1) Let $T:E\rightarrow E$, and suppose $\lambda\notin \sigma_a(\overline T)$. Thus, there is $\alpha>0$ such that $\|(\lambda-\overline T)f\|\geq\alpha\|f\|$ for every $f\in \fbl_\mathbb C[E]$. Hence, we have that for $z\in E$ 
$$
\|(\lambda-T)z\|=\|(\lambda-\overline T)\delta_E(z)\|\geq \alpha \|\delta_E(z)\|=\alpha\|z\|.
$$
Thus, $\lambda\notin\sigma_a(T)$.

Similarly, the statement about point spectra follows from observing that if $Tx=\lambda z$, then $\overline T \delta_E(z)= \delta_E(\lambda z)=\lambda\delta_E(z)$.

(2) Suppose $0\notin\sigma(T)$, that is $T:E\rightarrow E$ is an isomorphism. Then there is $T^{-1}:E\rightarrow E$ such that $T T^{-1}=T^{-1}T=\text{Id}_E$. Since $\overline{\text{Id}_E}=\text{Id}_{\fbl_\mathbb C[E]}$, it is clear that in this case, $\overline{T^{-1}}=\overline T^{-1}$, so that $\overline T$ is an isomorphism and $0\notin \sigma(\overline T)$.

Conversely, suppose $\overline T$ is an isomorphism. Note that $\overline T$ maps the range of $\delta_E$ onto itself. Indeed, we have that $\overline T(\delta_E(E))=\delta_E T(E)\subset \delta_E(E)$ and so $\overline T(\delta_E(E))$ is a closed subspace of $\fbl_\mathbb C[E]$ because $\overline T$ is an isomorphism. If $F=\overline T(\delta_E(E))\subsetneq \delta_E(E)$, then we can find a nonzero $z^*\in E^*$ which is identically zero on $\delta_E^{-1}(F)$. Since $\overline T$ is a lattice homomorphism, its range would be contained in the closed sublattice generated by $F$. But $\overline{\text{lat}}\{F\}\subset \{f\in \fbl_\mathbb{C}[E]\::\: f(\re z^*)=0\}$ and hence $\overline T$ would not be onto, which is a contradiction.

Now, we claim that the inverse 
$$
\overline T^{-1}:\fbl_\mathbb C[E]\to \fbl_\mathbb C[E]
$$ 
also maps $\delta_E(E)$ to $\delta_E(E)$. Indeed, suppose that for some $w\in E$ we have $\overline T^{-1}\delta_E(w)=f\in \fbl_\mathbb C[E]\backslash \delta_E(E)$. Since $\overline T(\delta_E(E))=\delta_E(E)$, there is $z\in E$ such that $\overline T\delta_E (z)=\delta_E(w)$. Hence, we would have that $\overline Tf=\delta_E(w)=\overline T\delta_E(z)$, which is a contradiction with the injectivity of $\overline T$. It follows then that 
$$
\delta^{-1}_E\overline T^{-1}\delta_E:E\to E
$$ 
is the inverse of $T$, thus showing that $0\notin \sigma(T)$.

(3) For every $n\in\mathbb N$, we have that $\overline T^n$ is a lattice homomorphism on $\fbl_\mathbb C[E]$ which extends $T^n$. Hence, $\overline {T^n}=\overline T^n$, and in particular
$$
\|\overline T^n\|=\|T^n\|.
$$
It follows from Gelfand's formula for the spectral radius \cite[Theorem 6.12]{AA-book} that
$$
r(\overline T)=\lim_n \|\overline T^n\|^{\frac1n}=\lim_n \|T^n\|^{\frac1n}=r(T).
$$
\end{proof}

\begin{rem}
The following facts, which are known in the real case (see \cite[Section 3]{OTTT}), can be easily extended to the complex setting: 
\begin{enumerate}
    \item $T$ is injective if and only if $\overline{T}$ is injective.
    \item $T$ has dense range if and only if $\overline T$ has dense range.
\end{enumerate}
Taking this into account, the second part of the former proposition can be refined as follows: $0\in \sigma_p(T)$ if and only if $0\in \sigma_p(\overline T)$; $0\in \sigma_c(T)$ if and only if $0\in \sigma_c(\overline T)$; $0\in \sigma_r(T)$ if and only if $0\in \sigma_r(\overline T)$.
\end{rem}

\begin{rem}
In general, we can have $\sigma_p(T)\neq\sigma_p(\overline T)$, $\sigma_a(T)\neq\sigma_a(\overline T)$ or $\sigma(T)\neq\sigma(\overline T)$. Consider the operator $T:\mathbb{C}\to \mathbb{C}$ defined by $Tz=-z$, which has $\sigma(T)=\sigma_a(T)=\sigma_p(T)=\{-1\}$. However, the function $(f+ig):\mathbb{R}^2\to \mathbb{C}$ defined by $(f+ig)(x,y)=\sqrt{x^2+y^2}$ is a fixed point of $\overline{T}$ and then $1\in \sigma_p(\overline T)$.
\end{rem}

In view of the results exposed in Proposition \ref{p:spectra} one may ask whether it is always true that $\sigma(T)$ is contained in $\sigma(\overline{T})$. The remainder of this section will be dedicated to exploring this question. Let us begin by showing that the positive elements of $\sigma(T)$ belong also to $\sigma(\overline{T})$.

\begin{prop}\label{prop:positive-preservation}
If $\lambda\in \sigma(T)\cap [0,+\infty)$, then $\lambda\in \sigma (\overline{T})$. 
\end{prop}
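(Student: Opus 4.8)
The plan is to split $\sigma(T)$ according to the approximate point spectrum and to treat the ``missing'' part by passing to adjoints. If $\lambda\in\sigma_a(T)$, then Proposition \ref{p:spectra}(1) already gives $\lambda\in\sigma_a(\overline T)\subseteq\sigma(\overline T)$, and there is nothing more to do (this step needs no sign assumption). So I would reduce to the case $\lambda\in\sigma(T)\setminus\sigma_a(T)$: here $\lambda-T$ is bounded below (injective with closed range) but, being non-invertible, fails to be surjective. By the standard duality between boundedness below and surjectivity, $\lambda-T$ not surjective is equivalent to $(\lambda-T)^*=\lambda-T^*$ not being bounded below, that is, $\lambda\in\sigma_a(T^*)$. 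Thus it suffices to transport an ``approximate eigenfunctional'' of $T^*$ to $\overline T^*$ and invoke $\sigma(\overline T^*)=\sigma(\overline T)$.

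The technical heart is an intertwining identity on the dual. First I would record that, by Proposition \ref{prop:44} applied to the lattice homomorphism $\overline T$ together with the identity $\delta_E(w)(\re z^*)=z^*(w)$ from Lemma \ref{l:delta}, the associated map is exactly the adjoint, $\Phi_{\overline T}z^*=T^*z^*$ for all $z^*\in E^*$. Since $\overline T$ is then the composition operator $f\mapsto f\circ(\Phi_{\overline T})^{\re}$, a direct evaluation on the lattice-homomorphism functionals $\varphi_{\re z^*}(f+ig)=f(\re z^*)+ig(\re z^*)$ yields
$$
\overline T^*\varphi_{\re z^*}=\varphi_{\re(T^*z^*)},\qquad z^*\in E^*.
$$
Moreover $z^*\mapsto\varphi_{\re z^*}$ is an $\mathbb R$-linear isometric embedding of $E^*$ into $\fbl_{\mathbb C}[E]^*$: one has $|\varphi_{\re z^*}(f)|=|f|(\re z^*)$, so the defining formula for $\|\cdot\|_{\fbl_{\mathbb C}[E]}$ gives $\|\varphi_{\re z^*}\|\leq\|z^*\|$, while testing on $\delta_E$ gives the reverse inequality.

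With these tools in place I would finish as follows. Choose $z_n^*\in E^*$ with $\|z_n^*\|=1$ and $\|(\lambda-T^*)z_n^*\|\to0$. Since $\lambda$ is real, scalar multiplication commutes with taking real parts, so $\lambda\varphi_{\re z_n^*}=\varphi_{\re(\lambda z_n^*)}$; combined with additivity of $w^*\mapsto\varphi_{\re w^*}$ and the intertwining identity this gives $(\lambda-\overline T^*)\varphi_{\re z_n^*}=\varphi_{\re((\lambda-T^*)z_n^*)}$, whence $\|(\lambda-\overline T^*)\varphi_{\re z_n^*}\|=\|(\lambda-T^*)z_n^*\|\to0$ while $\|\varphi_{\re z_n^*}\|=1$. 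Hence $\lambda-\overline T^*$ is not bounded below, so $\lambda\in\sigma_a(\overline T^*)\subseteq\sigma(\overline T^*)=\sigma(\overline T)$, as required.

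The main obstacle, and the place where the hypothesis enters, is precisely this last transport step: the embedding $z^*\mapsto\varphi_{\re z^*}$ is only $\mathbb R$-linear (one checks $\varphi_{\re(iz^*)}\neq i\varphi_{\re z^*}$ in general), so the identity $\lambda\varphi_{\re z^*}=\varphi_{\re(\lambda z^*)}$ that lets me pull the scalar through the embedding is available exactly when $\lambda$ is real; in particular this covers $\lambda\in[0,+\infty)$. The reduction to the adjoint is what forces us past Proposition \ref{p:spectra}(1), which controls only the approximate point spectrum: the residual (surjectivity) part of $\sigma(T)$ is invisible from $E$ itself and becomes accessible only after dualizing, where the $\mathbb R$-linear isometry $z^*\mapsto\varphi_{\re z^*}$ does the work.
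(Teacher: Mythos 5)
Your skeleton (split off $\sigma_a(T)$ via Proposition \ref{p:spectra}(1), dualize for the rest) matches the paper's, and several ingredients are correct: indeed $\Phi_{\overline T}=T^*$, hence $\overline T^*\varphi_{\re z^*}=\varphi_{\re (T^*z^*)}$ for each fixed $z^*$, and $\|\varphi_{\re z^*}\|=\|z^*\|$. But the transport step has a genuine gap: the map $w^*\mapsto \varphi_{\re w^*}$ is \emph{not} additive, so your displayed identity $(\lambda-\overline T^*)\varphi_{\re z_n^*}=\varphi_{\re ((\lambda-T^*)z_n^*)}$ is false in general. Additivity of that map would mean that every $f\in\fbl_{\mathbb C}[E]$ is an additive function of the evaluation point, whereas elements of $\fbl_{\mathbb C}[E]$ are merely positively homogeneous; the Remark at the end of Section \ref{s:fblC} states that the additive elements are precisely those in $\delta_E(E)$. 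Concretely, take $f=|\delta_{E_{\mathbb R}}(z)|$ and $x^*,y^*\in (E_{\mathbb R})^*$ with $x^*(z)=1=-y^*(z)$: then $f(x^*)-f(y^*)=0$ while $f(x^*-y^*)=2$, so $\varphi_{x^*}-\varphi_{y^*}\neq\varphi_{x^*-y^*}$. Nor can the identity be salvaged approximately: that would require a uniform Lipschitz-type estimate $|f(x^*)-f(y^*)|\le C\|f\|\,\|x^*-y^*\|$ for all $f$ in the unit ball of $\fbl_{\mathbb C}[E]$, which is not available; so $\|\lambda z_n^*-T^*z_n^*\|\to 0$ gives no control on $\|\lambda\varphi_{\re z_n^*}-\varphi_{\re (T^*z_n^*)}\|$. (A secondary inaccuracy: $\varphi_{\lambda\re z^*}=\lambda\varphi_{\re z^*}$ comes from positive homogeneity and hence needs $\lambda\ge 0$, not merely $\lambda$ real, so your closing parenthetical overstates the scope of the argument.)

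The repair is to use exact, not approximate, eigenvectors of $T^*$, and in your remaining case these are available: if $\lambda\in\sigma(T)\setminus\sigma_a(T)$, then $\lambda-T$ is bounded below, hence has closed range; being non-invertible, its range is a proper closed subspace, so it is not dense and therefore $\lambda\in\sigma_p(T^*)$. An exact eigenvector transports without any additivity: if $T^*z^*=\lambda z^*$ with $z^*\neq 0$ and $\lambda\ge 0$, then for every $f$ one has $\overline T^*\varphi_{\re z^*}(f)=f(\re T^*z^*)=f(\lambda \re z^*)=\lambda f(\re z^*)$, i.e.\ $\varphi_{\re z^*}$ is an eigenvector of $\overline T^*$, whence $\lambda\in\sigma(\overline T^*)=\sigma(\overline T)$. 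This fixed argument is exactly the paper's proof, phrased through the adjoint: the paper writes $\sigma(T)=\sigma_a(T)\cup\sigma_r(T)$, uses $\sigma_r(T)\subseteq\sigma_p(T^*)$, and observes that every element of the range of $\lambda-\overline T$ vanishes at the point $\re z^*$, so that range is not dense.
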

\begin{proof}
Since $\sigma(T)=\sigma_a(T)\cup\sigma_r(T)$, by the first part of Proposition \ref{p:spectra} it is enough to show that $\sigma_r(T)\cap [0,+\infty)\subset \sigma(\overline T)$. Fix any $\lambda\in\sigma_r(T)\cap [0,+\infty)$. Then $\lambda\in \sigma_p(T^*)$ (see \cite[Theorem 6.19]{AA-book}). Take $z^*\in E^*$, $z^*\neq 0$, such that $T^*z^*=\lambda z^*$. For every $f\in \fbl_\mathbb{C}[E]$, by the positive homogeneity of $f$, we have that
$$(\lambda - \overline T)f(\re z^*)=\lambda f(\re z^*) - f(\re T^* z^*)=\lambda f(\re z^*)- f(\lambda \re z^*)=0,$$
so that $\lambda - \overline T$ cannot have dense range and, in particular, $\lambda\in \sigma(\overline T)$.
\end{proof}

Given $\theta\in\mathbb R$, let us denote by $M_\theta$ the operator defined on $E$ by $M_\theta z=e^{i\theta}z$, for every $z\in E$. These operators will play an important role in the proof of Proposition \ref{prop:preserves spectrum}, which is a generalization of the preceding proposition. Before this, we are going to describe the spectrum of the operators $\{M_\theta\}_{\theta\in\mathbb{R}}$.

\begin{prop}
Suppose that $e^{i\theta}$ is a primitive $n$th root of unity. Then $\sigma(\overline{M_\theta})=\{e^{i\frac{2k\pi}{n}}\::\: k=0,1,\ldots, n-1\}$.
\end{prop}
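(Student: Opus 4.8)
The plan is to compute $\sigma(\overline{M_\theta})$ when $e^{i\theta}$ is a primitive $n$th root of unity by combining the spectral stability results already established with a direct analysis of $\overline{M_\theta}$ as a lattice homomorphism. First I would observe that $M_\theta^n = M_{n\theta} = \mathrm{Id}_E$, since $e^{in\theta}=1$. By the multiplicativity established in part (3) of Proposition \ref{p:spectra} (namely $\overline{T^n}=\overline{T}^n$), it follows that $\overline{M_\theta}^{\,n} = \overline{M_\theta^n} = \overline{\mathrm{Id}_E} = \mathrm{Id}_{\fbl_{\mathbb C}[E]}$. Thus $\overline{M_\theta}$ satisfies the polynomial equation $X^n - 1 = 0$, and by the spectral mapping theorem its spectrum is contained in the set of $n$th roots of unity, $\{e^{i 2k\pi/n} : k=0,1,\ldots,n-1\}$. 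This gives the inclusion $\sigma(\overline{M_\theta}) \subseteq \{e^{i 2k\pi/n}\}$.

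For the reverse inclusion I must show every $n$th root of unity actually lies in $\sigma(\overline{M_\theta})$. The cleanest route is to exhibit explicit eigenfunctions. Working in the concrete model $\fbl_{\mathbb C}[E] = \fbl[E_{\mathbb R}]\oplus i\,\fbl[E_{\mathbb R}]$, recall that $\overline{M_\theta}$ acts by composition with $\Phi$, where $M_\theta$ multiplies by $e^{i\theta}$; concretely, for $h\in \fbl_{\mathbb C}[E]$ one has $(\overline{M_\theta}h)(\re z^*) = h(\re z^*\circ M_\theta)$, reflecting that $\overline{M_\theta}$ precomposes evaluation functionals with rotation by $e^{i\theta}$ on $E$. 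The idea is that rotating the argument $z\in E$ by $e^{i\theta}$ induces, on the positively homogeneous functions comprising $\fbl_{\mathbb C}[E]$, a finite-order action whose eigenvalues are exactly the $n$th roots of unity. For each $k$, I would seek a nonzero $h_k\in \fbl_{\mathbb C}[E]$ with $\overline{M_\theta}\,h_k = e^{i2k\pi/n}\,h_k$; this would place each $e^{i2k\pi/n}$ in $\sigma_p(\overline{M_\theta})\subseteq \sigma(\overline{M_\theta})$. A natural candidate is to average a generating function against the characters of the cyclic group generated by $M_\theta$: given any suitable $g\in\fbl_{\mathbb C}[E]$, set
$$
h_k = \sum_{j=0}^{n-1} e^{-i 2k\pi j/n}\,\overline{M_\theta}^{\,j} g,
$$
which formally satisfies $\overline{M_\theta}\,h_k = e^{i 2k\pi/n}\,h_k$ by reindexing the sum and using $\overline{M_\theta}^{\,n}=\mathrm{Id}$.

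The main obstacle will be verifying that these averaged functions $h_k$ are genuinely nonzero for a well-chosen $g$, rather than collapsing to zero. The concern is real: for instance, starting from $g=\delta_E(z)$ (a $\mathbb C$-linear function of $z^*$), the projection $h_k$ onto the $e^{i2k\pi/n}$-eigenspace may vanish for most $k$, since linear functions only pick up a narrow band of characters. To produce a nonzero projection in every eigenspace I expect to need a richer generating element — for example a modulus or a lattice combination such as $|\delta_E(z)|$ or $\bigvee_{l}\delta_E(e^{il\theta}z)$ — whose orbit under the rotation group spans enough of the character decomposition. I would argue nonvanishing by evaluating $h_k$ at a carefully chosen $\re z^*$ and checking the resulting scalar sum $\sum_{j} e^{-i2k\pi j/n} g(\re z^*\circ M_\theta^{\,j})$ is nonzero; here the freedom in choosing $z^*$ and the fact that the values $g(\re z^*\circ M_\theta^{\,j})$ need not be proportional to a single character of $j$ is what makes the argument go through. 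Once nonvanishing is secured for each $k$, combining $\{e^{i2k\pi/n}\}\subseteq\sigma_p(\overline{M_\theta})$ with the spectral-mapping inclusion yields the claimed equality $\sigma(\overline{M_\theta})=\{e^{i2k\pi/n} : k=0,\ldots,n-1\}$.
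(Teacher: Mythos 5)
Your containment $\sigma(\overline{M_\theta})\subseteq\{e^{2\pi ik/n}\,:\,k=0,\ldots,n-1\}$ is correct, and your route is in fact cleaner than the paper's: the paper also starts from $\overline{M_\theta}^{\,n}=\overline{M_\theta^n}=\mathrm{Id}$ (this identity comes from uniqueness in the universal property; it is derived \emph{inside} the proof of Proposition \ref{p:spectra}(3), not stated by it), but then proves the containment by an approximate-eigenvector induction, using that $\overline{M_\theta}$ is an invertible isometry so that $\sigma(\overline{M_\theta})=\sigma_a(\overline{M_\theta})$. Your one-line appeal to the spectral mapping theorem replaces all of that.

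The reverse inclusion is where your proposal has a genuine gap, and it is not a deferrable technicality. Since $\overline{M_\theta}^{\,n}=\mathrm{Id}$, the space decomposes as $\bigoplus_{k=0}^{n-1}\ker\bigl(\overline{M_\theta}-e^{2\pi ik/n}\,\mathrm{Id}\bigr)$ with spectral projections $P_k=\frac1n\sum_{j=0}^{n-1}e^{-2\pi ikj/n}\,\overline{M_\theta}^{\,j}$, and $e^{2\pi ik/n}\in\sigma(\overline{M_\theta})$ holds exactly when $P_k\neq0$. So ``the averaged element $h_k$ is nonzero for a well-chosen $g$'' is not a verification left to do at the end: it \emph{is} the statement to be proved, and your sketch leaves it open. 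Worse, the candidate generators you name demonstrably fail: $|\delta_E(z)|$ and any supremum over the orbit $\{e^{il\theta}z\}_{l=0}^{n-1}$ are $\overline{M_\theta}$-fixed, so all their projections except $P_0$ vanish; $\delta_E(z)$ is itself an eigenvector for $e^{i\theta}$, so only one projection survives; and even $g=\delta_{E_\mathbb{R}}(z)^+$ fails in general, since its orbit sequence is $j\mapsto r\bigl(\cos(j\theta+\alpha)\bigr)^+$ and the Fourier expansion of $(\cos)^+$ has no odd harmonics beyond $\pm1$, so aliasing modulo $n$ gives, e.g.\ for $n=8$, $P_3g=0$ for every choice of $z$ and $z^*$. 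A correct completion needs a lattice-theoretic input that your character averaging lacks. The paper's route: $e^{i\theta}\in\sigma_p(M_\theta)$ trivially, hence $e^{i\theta}\in\sigma_p(\overline{M_\theta})$ by Proposition \ref{p:spectra}(1), and then the cyclicity of the spectrum of a lattice homomorphism \cite[Theorem 7.23]{AA-book}, together with primitivity of $e^{i\theta}$, yields all $n$th roots of unity at once. Alternatively, your averaging idea can be repaired by producing explicit eigenvectors through Krivine (complex) functional calculus, $h_k=\delta_E(z)^k/|\delta_E(z)|^{k-1}$, which satisfies $\overline{M_\theta}h_k=e^{ik\theta}h_k$ and is nonzero; but then membership of $h_k$ in $\fbl_\mathbb{C}[E]$ and the commutation of lattice homomorphisms with the functional calculus must be justified --- ingredients absent from your proposal.
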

\begin{proof}
First, observe that $\overline{M_\theta}$ is a lattice isometry since $M_\theta$ is an isometry. Thus, $\sigma_a(M_\theta)=\sigma(M_\theta)\subset \partial \mathbb{D}=\{\lambda\in \mathbb C:|\lambda|=1\}$. As $e^{i\theta}$ belongs to $\sigma_p(M_\theta)$, by Proposition 6.1, $e^{i\theta}$ is also in $\sigma(\overline{M_\theta})$. Given that the spectrum of a lattice homomorphism is cyclic (see \cite[Theorem 7.23]{AA-book}), we have that 
$$
\{e^{i\frac{2k\pi}{n}}\::\: k=0,1,\ldots, n-1\}\subset \sigma(\overline{M_\theta}).
$$

Now, let us see that  $\sigma_a(\overline{M_\theta})=\sigma(\overline{M_\theta})\subset \{e^{i\frac{2k\pi}{n}}\::\: k=0,1,\ldots, n-1\}$. To this end, take $\lambda\in \sigma_a(\overline{M_\theta})$. Then, we can find a sequence of complex numbers $(\lambda_k)_{k=1}^\infty$ with $\lambda_k\to \lambda$ and $(f_k)_{k=1}^\infty \subset \fbl_\mathbb{C}[E]$ with $\|f_k\|=1$ for every $k\in\mathbb N$, such that $\|\lambda_k f_k- \overline{M_\theta} f_k\| \to 0$. It is straightforward to check by induction that 
$$
\|\lambda_k^n f_k- \overline{M_\theta}^n f_k\|\to 0
$$ 
as $k\to\infty$. Indeed, let us suppose that $\|\lambda_k^m f_k- \overline{M_\theta}^m f_k\|\to 0$ for some $m\in\mathbb{N}$. For every $k\in\mathbb{N}$ we have that
\begin{eqnarray*}
 \|\lambda_k^{m+1} f_k- \overline{M_\theta}^{m+1} f_k\| &=& \|\lambda_k^{m+1} f_k - \lambda_k^m \overline{M_\theta} f_k+ \lambda_k^m \overline{M_\theta} f_k- \overline{M_\theta}^{m+1} f_k\| \\
 &\leq & |\lambda_k^m|\|\lambda_k f_k -\overline{M_\theta} f_k\|+ \| \overline{M_\theta}\|\|\lambda_k^m f_k- \overline{M_\theta}^m f_k\|,
\end{eqnarray*}
and, by hypothesis, the last term converges to zero.
 Since $\overline{M_\theta}^n=\text{Id}$, we deduce that $\|\lambda_k^n f_k- \overline{M_\theta}^n f_k\|=|\lambda_k^n-1|\to 0$ as $k\to\infty$, that is  $\lambda_k^n\to 1$. By the uniqueness of the limit, we obtain that $\lambda^n=1$, that is, $\lambda\in \{e^{i\frac{2k\pi}{n}}\::\: k=0,1,\ldots, n-1\}$.
\end{proof}

\begin{rem}
If $\theta=2\pi t$, for some irrational number $t$, then $\sigma(\overline{M_\theta})=\partial \mathbb{D}$. Indeed, observe that $\sigma(\overline{M_\theta})\subset \partial \mathbb{D}$ since $\overline{M_\theta}$ is an isometry. On the other hand, since $\sigma(\overline{M_\theta})$ is closed and cyclic, we have that 
$
\partial \mathbb{D}=\overline{\{e^{i n t}\::\:n\in\mathbb{N}\}}\subset \sigma(\overline{M_\theta}).
$
\end{rem}

\begin{prop}\label{prop:preserves spectrum}
Let $T:E\to E$ be a complex operator. If $\lambda\in\sigma(T)$, then $|\lambda|\in \sigma(\overline T)$.
\end{prop}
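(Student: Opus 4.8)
The plan is to split $\sigma(T)=\sigma_a(T)\cup\sigma_r(T)$ (exactly as in the proof of Proposition \ref{prop:positive-preservation}) and to produce, for each piece, an explicit witness for $|\lambda|\in\sigma(\overline T)$. The guiding principle is that one cannot simply ``rotate'' $\lambda$ to $|\lambda|$ by replacing $T$ with $e^{-i\phi}T$, because $\overline{(\cdot)}$ is a functor but not homogeneous in the operator, so $\overline{e^{-i\phi}T}\neq e^{-i\phi}\overline T$ in general (this is precisely the phenomenon behind the failure $\sigma(T)\not\subseteq\sigma(\overline T)$). Instead I would exploit that $\overline T$ \emph{preserves moduli} and that the modulus is genuinely homogeneous over $\mathbb C$; this is what turns the phase $e^{i\phi}$ into the real factor $|\lambda|$.

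For the approximate point spectrum, suppose $\lambda\in\sigma_a(T)$ and choose unit vectors $z_n\in E$ with $\|(\lambda-T)z_n\|\to0$. I would test $\overline T$ against the positive unit vectors $|\delta_E(z_n)|\in\fbl_{\mathbb C}[E]$ (which are unit vectors since $\delta_E$ is isometric, by Lemma \ref{l:delta}). Because $\overline T$ is a lattice homomorphism and $\delta_E$ is $\mathbb C$-linear,
\[
\overline T\,|\delta_E(z_n)|=|\overline T\delta_E(z_n)|=|\delta_E(Tz_n)|,
\]
whereas $|\lambda|\,|\delta_E(z_n)|=|\delta_E(\lambda z_n)|$ by homogeneity of the modulus. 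Using the pointwise inequality $\bigl|\,|u|-|v|\,\bigr|\le|u-v|$ valid in any complex Banach lattice, together with the isometry of $\delta_E$,
\[
\bigl\|(|\lambda|-\overline T)\,|\delta_E(z_n)|\bigr\|=\bigl\|\,|\delta_E(Tz_n)|-|\delta_E(\lambda z_n)|\,\bigr\|\le\|Tz_n-\lambda z_n\|\to0,
\]
so $|\lambda|-\overline T$ is not bounded below and $|\lambda|\in\sigma_a(\overline T)$.

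For the residual spectrum, suppose $\lambda\in\sigma_r(T)$ with $\lambda\neq0$ (the case $\lambda=0$ is covered by Proposition \ref{p:spectra}(2)); then $\lambda\in\sigma_p(T^*)$, so there is $z^*\neq0$ with $T^*z^*=\lambda z^*$. Write $\lambda=|\lambda|e^{i\phi}$. Since $\overline T$ acts as the composition operator with $\Phi_{\overline T}z^*=T^*z^*$ (Proposition \ref{prop:44}), a short computation using positive homogeneity gives, for the rotated functionals $x^*_\alpha:=\re(e^{i\alpha}z^*)$,
\[
\overline T f(x^*_\alpha)=f\bigl(\re(T^*e^{i\alpha}z^*)\bigr)=|\lambda|\,f(x^*_{\alpha+\phi}),\qquad f\in\fbl_{\mathbb C}[E],
\]
so that $(|\lambda|-\overline T)f(x^*_{j\phi})=|\lambda|\bigl(f(x^*_{j\phi})-f(x^*_{(j+1)\phi})\bigr)$ telescopes. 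I would therefore consider the Ces\`aro averages of evaluation functionals
\[
\Psi_N=\frac1N\sum_{j=0}^{N-1}\varphi_{x^*_{j\phi}},\qquad \varphi_{x^*}(f)=f(x^*),
\]
which are uniformly bounded because $\|\varphi_{x^*_{j\phi}}\|\le\|z^*\|$ (as $\|x^*_{j\phi}\|=\|z^*\|$). The telescoping yields $\bigl|\Psi_N\bigl((|\lambda|-\overline T)f\bigr)\bigr|\le \frac{|\lambda|}{N}\,2\|z^*\|\,\|f\|\to0$. Passing to a weak$^*$ cluster point $\Psi$ (Banach--Alaoglu), $\Psi$ annihilates the range of $|\lambda|-\overline T$; and since $\Psi_N(|\delta_E(w)|)=|z^*(w)|$ for every $N$, choosing $w$ with $z^*(w)\neq0$ forces $\Psi\neq0$. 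Hence $|\lambda|-\overline T$ does not have dense range and $|\lambda|\in\sigma(\overline T)$.

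The main obstacle is the residual case with an irrational phase $\phi/2\pi$: then there is no $n$ with $e^{in\phi}=1$, the finite telescoping sum never closes up, and no single finite combination of evaluations annihilates the range. The Ces\`aro averaging combined with weak$^*$ compactness is designed exactly for this, since it only needs the boundary terms $f(x^*_0)-f(x^*_{N\phi})$ to stay bounded while the denominator grows; this argument works uniformly whether or not $e^{i\phi}$ is a root of unity, so no case split on the rationality of $\phi$ is required.
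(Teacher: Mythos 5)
Your proof is correct, and it takes a genuinely different route from the paper's. The paper reduces everything to the positive case (Proposition \ref{prop:positive-preservation}): writing $\lambda=|\lambda|e^{i\theta}$, it observes $|\lambda|\in\sigma(M_{-\theta}T)$, hence $|\lambda|\in\sigma\bigl(\overline{M_{-\theta}}\,\overline{T}\bigr)$ by functoriality, and then combines the spectral inclusion $\sigma\bigl(\overline{M_{-\theta}}\,\overline{T}\bigr)\subset\sigma\bigl(\overline{M_{-\theta}}\bigr)\sigma\bigl(\overline{T}\bigr)$ for commuting operators \cite{Rudin} with $\sigma\bigl(\overline{M_{-\theta}}\bigr)\subset\partial\mathbb{D}$ and cyclicity of the spectrum of a lattice homomorphism \cite{AA-book} to conclude. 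You instead split $\sigma(T)=\sigma_a(T)\cup\sigma_r(T)$ and produce explicit witnesses in each case: for $\lambda\in\sigma_a(T)$ the positive unit vectors $|\delta_E(z_n)|$ are approximate eigenvectors of $\overline{T}$ for $|\lambda|$, via modulus-preservation of lattice homomorphisms, $\mathbb{C}$-homogeneity of the modulus, and the lattice inequality $\bigl|\,|u|-|v|\,\bigr|\le|u-v|$; for $\lambda\in\sigma_r(T)$ your Ces\`aro averages of rotated evaluation functionals, together with Banach--Alaoglu, yield a nonzero functional annihilating the range of $|\lambda|-\overline{T}$. I checked the key points: the identity $\overline{T}f(x^*_\alpha)=|\lambda|f(x^*_{\alpha+\phi})$ does follow from $\Phi_{\overline{T}}=T^*$ (Proposition \ref{prop:44}) and positive homogeneity, the uniform bound $\|\varphi_{x^*_{j\phi}}\|\le\|z^*\|$ holds by the definition of the norm (\ref{eq:norm complex free}), and the telescoping and cluster-point steps are sound. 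What your approach buys: it avoids both the cyclicity theorem and the commuting-product spectral inclusion, it subsumes Proposition \ref{prop:positive-preservation} as a special case (your residual argument with $\phi=0$ is exactly the paper's), and it gives sharper localization --- $|\lambda|\in\sigma_a(\overline{T})$ when $\lambda\in\sigma_a(T)$, and failure of dense range of $|\lambda|-\overline{T}$ when $\lambda\in\sigma_r(T)$. What the paper's approach buys is brevity: given its preparatory results on $M_\theta$ and the positive case, its proof is three lines, whereas your residual-spectrum argument re-proves and extends that machinery from scratch.
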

\begin{proof}
Let us write $\lambda=|\lambda|e^{i\theta}$ for some $\theta\in [0,2\pi)$. Therefore, $|\lambda|\in \sigma(M_{-\theta}T)$, and by Proposition \ref{prop:positive-preservation} we get that 
$$
|\lambda|\in \sigma(\overline{M_{-\theta}T})=\sigma(\overline{M_{-\theta}}\,\overline{T})\subset \sigma(\overline{M_{-\theta}})\sigma(\overline{T}),
$$ 
the latter inclusion following from commutativity of $\overline{M_{-\theta}}$ and $\overline{T}$ \cite[Theorem 11.23]{Rudin}.

Since $\sigma(\overline{M_{-\theta}})\subset \partial \mathbb{D}$ and $\sigma(\overline T)$ is cyclic by \cite[Theorem 7.23]{AA-book}, it follows that $|\lambda|\in\sigma(\overline{T})$.
\end{proof}

We will see next that in the case when $E$ is a Banach lattice and $T$ is a lattice homomorphism, then we always have $\sigma(T)\subset \sigma(\overline T)$. This fact will cover many instance of classical operators arising in the literature:

\begin{example}
The following operators are remarkable examples of complex lattice homomorphisms.
\begin{enumerate}
    \item Let $K$ be a compact Hausdorff space and let $h:K\to K$ a continuous mapping. The \textit{composition operator} $C_h:\mathcal{C}(K)_\mathbb{C}\to\mathcal{C}(K)_\mathbb{C}$ defined by $C_h(f)=f\circ h$. 
    \item If $g\in\mathcal{C}(K)_\mathbb{C}$ is a positive function, the \textit{multiplication operator} $M_g(f)=g f$, for every $f\in \mathcal{C}(K)_\mathbb{C}$
    \item For $1\leq p\leq \infty$, the \textit{backward shift operator} $B:\ell_p\to \ell_p$ defined by
    $$
    B(z_1,z_2,\ldots)=(z_2,z_3,\ldots),
    $$
    and the \textit{forward shift operator} $F:\ell_p\to \ell_p$ defined by
    $$
    F(z_1,z_2,\ldots)=(0,z_1,z_2,\ldots).
    $$
\end{enumerate}
\end{example}

\begin{lem}
If $X_\mathbb{C}$ is a complex Banach lattice, then there exists an ideal $I_\mathbb{C}$ in $\fbl_\mathbb{C}[X_\mathbb{C}]$ such that $\fbl_\mathbb{C}[X_\mathbb{C}]=\delta_{X_\mathbb{C}}(X_\mathbb{C})\oplus I_\mathbb{C}$.
\end{lem}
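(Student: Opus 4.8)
The plan is to use the fact that $X_\mathbb{C}$ is simultaneously the complex Banach \emph{space} that generates $\fbl_\mathbb{C}[X_\mathbb{C}]$ and a complex Banach \emph{lattice}, so that the identity map on $X_\mathbb{C}$ can be fed into the universal property to produce a lattice-homomorphic retraction of $\delta_{X_\mathbb{C}}$; the projection associated with this retraction then splits off $\delta_{X_\mathbb{C}}(X_\mathbb{C})$, and its kernel will be the desired ideal.

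First I would view $\text{Id}_{X_\mathbb{C}}\colon X_\mathbb{C}\to X_\mathbb{C}$ as a $\mathbb{C}$-linear operator from the complex Banach space $X_\mathbb{C}$ into the complex Banach lattice $X_\mathbb{C}$. Applying the universal property of $\fbl_\mathbb{C}[X_\mathbb{C}]$ to this operator yields a unique lattice homomorphism $Q\colon \fbl_\mathbb{C}[X_\mathbb{C}]\to X_\mathbb{C}$ with $Q\circ\delta_{X_\mathbb{C}}=\text{Id}_{X_\mathbb{C}}$ and $\|Q\|=\|\text{Id}_{X_\mathbb{C}}\|=1$. In particular $\delta_{X_\mathbb{C}}$ is a right inverse of $Q$, so $Q$ is surjective.

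Next I would set $P=\delta_{X_\mathbb{C}}\circ Q$ and verify that it is a bounded linear projection. Idempotency follows from $P^2=\delta_{X_\mathbb{C}}(Q\delta_{X_\mathbb{C}})Q=\delta_{X_\mathbb{C}}\,\text{Id}_{X_\mathbb{C}}\,Q=P$, and $\|P\|\le\|\delta_{X_\mathbb{C}}\|\,\|Q\|=1$. Since $Q\delta_{X_\mathbb{C}}=\text{Id}_{X_\mathbb{C}}$, the operator $P$ fixes $\delta_{X_\mathbb{C}}(X_\mathbb{C})$ pointwise while its range is clearly contained in $\delta_{X_\mathbb{C}}(X_\mathbb{C})$; hence the range of $P$ is exactly $\delta_{X_\mathbb{C}}(X_\mathbb{C})$. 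Because $\delta_{X_\mathbb{C}}$ is injective, $Pf=0$ is equivalent to $Qf=0$, so $\ker P=\ker Q=:I_\mathbb{C}$. Thus $\fbl_\mathbb{C}[X_\mathbb{C}]=\delta_{X_\mathbb{C}}(X_\mathbb{C})\oplus I_\mathbb{C}$ as a topological direct sum of the range and kernel of $P$.

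It remains to check that $I_\mathbb{C}=\ker Q$ is a complex ideal, and this is where I would invoke that $Q$ is a lattice homomorphism. Since a complex lattice homomorphism preserves the modulus and is monotone on positive elements, whenever $|z|\le|w|$ with $Qw=0$ we get $|Qz|=Q|z|\le Q|w|=|Qw|=0$, so $Qz=0$ and $z\in I_\mathbb{C}$; this solidity is precisely the definition of a complex ideal recalled in Section \ref{s:preliminaries}. Equivalently, writing $Q=S_\mathbb{C}$ for the underlying real lattice homomorphism $S$, one has $I_\mathbb{C}=\ker S\oplus i\ker S$, the complexification of the real ideal $\ker S$. There is no essential obstacle here: the whole argument rests on the single observation that the universal property, applied to the identity of the lattice $X_\mathbb{C}$, furnishes the retraction $Q$; the identifications of the range and kernel of $P$ are then routine.
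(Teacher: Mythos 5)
Your proof is correct and follows essentially the same route as the paper: both apply the universal property to $\mathrm{Id}_{X_\mathbb{C}}$ to obtain a lattice-homomorphic retraction (the paper's $\beta$ is your $Q$), form the projection $P=\delta_{X_\mathbb{C}}\circ Q$, and identify $I_\mathbb{C}=\ker Q$, which is an ideal because $Q$ is a complex lattice homomorphism. Your write-up merely spells out the solidity of $\ker Q$ and the topological nature of the splitting, details the paper leaves implicit.
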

\begin{proof}
By the universal property of $\fbl_\mathbb{C}[X_\mathbb{C}]$, there is a complex lattice homomorphism $\beta:\fbl_\mathbb{C}[X_\mathbb{C}]\to X_\mathbb{C}$ such that $\beta\delta_{X_\mathbb{C}}=\text{Id}$. It is easy to check that the composition $P=\delta_{X_\mathbb{C}}\beta:\fbl_\mathbb{C}[X_\mathbb{C}]\to \fbl_\mathbb{C}[X_\mathbb{C}]$ defines a projection onto $\delta_{X_\mathbb{C}}(X_\mathbb{C})$. Since $\delta_{X_\mathbb{C}}$ is an isometry, we deduce that $\text{Ker}(P)=\text{Ker}(\beta)$ and note that $\text{Ker}(\beta)$ is an ideal in $\fbl_\mathbb{C}[X_\mathbb{C}]$ as $\beta$ is a complex lattice homomorphism.
\end{proof}

\begin{prop}\label{p:latticehomo}
If $T:X_\mathbb{C}\to X_\mathbb{C}$ is a lattice homomorphism, then $\sigma(T)\subset \sigma(\overline{T})$. 
\end{prop}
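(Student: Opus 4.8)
The plan is to exploit the decomposition $\fbl_\mathbb{C}[X_\mathbb{C}]=\delta_{X_\mathbb{C}}(X_\mathbb{C})\oplus I_\mathbb{C}$ furnished by the previous lemma, and to show that $\overline{T}$ respects this splitting, so that its spectrum is the union of the spectra of the two restrictions. The restriction to the copy $\delta_{X_\mathbb{C}}(X_\mathbb{C})$ will be similar to $T$, and this will yield $\sigma(T)\subset\sigma(\overline{T})$.

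The first step is to establish the intertwining relation $\beta\overline{T}=T\beta$, where $\beta:\fbl_\mathbb{C}[X_\mathbb{C}]\to X_\mathbb{C}$ is the lattice homomorphism with $\beta\delta_{X_\mathbb{C}}=\text{Id}$ used in the proof of the lemma. Both $\beta\overline{T}$ and $T\beta$ are lattice homomorphisms from $\fbl_\mathbb{C}[X_\mathbb{C}]$ to $X_\mathbb{C}$: this is where the hypothesis enters crucially, since $T\beta$ is a composition of lattice homomorphisms precisely because $T$ is assumed to be one. Moreover, both maps send $\delta_{X_\mathbb{C}}(x)$ to $Tx$ for every $x\in X_\mathbb{C}$, using $\overline{T}\delta_{X_\mathbb{C}}=\delta_{X_\mathbb{C}}T$ and $\beta\delta_{X_\mathbb{C}}=\text{Id}$. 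By the uniqueness part of the universal property, $\beta\overline{T}=T\beta$.

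From this identity I would immediately deduce that $\overline{T}$ leaves $I_\mathbb{C}=\text{Ker}(\beta)$ invariant, since $\beta f=0$ forces $\beta(\overline{T}f)=T(\beta f)=0$; and $\overline{T}$ clearly leaves $\delta_{X_\mathbb{C}}(X_\mathbb{C})$ invariant because $\overline{T}\delta_{X_\mathbb{C}}(X_\mathbb{C})=\delta_{X_\mathbb{C}}T(X_\mathbb{C})\subset\delta_{X_\mathbb{C}}(X_\mathbb{C})$. Consequently $\overline{T}$ commutes with the projection $P=\delta_{X_\mathbb{C}}\beta$ onto $\delta_{X_\mathbb{C}}(X_\mathbb{C})$ along $I_\mathbb{C}$, so $\overline{T}$ is block-diagonal for the decomposition. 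Writing $A=\overline{T}|_{\delta_{X_\mathbb{C}}(X_\mathbb{C})}$ and $B=\overline{T}|_{I_\mathbb{C}}$, the standard fact that $(\lambda-\overline{T})^{-1}$, whenever it exists, also commutes with $P$ and hence preserves both summands gives $\sigma(\overline{T})=\sigma(A)\cup\sigma(B)$.

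To finish, observe that $\delta_{X_\mathbb{C}}:X_\mathbb{C}\to\delta_{X_\mathbb{C}}(X_\mathbb{C})$ is a surjective isometric isomorphism with $A\delta_{X_\mathbb{C}}=\delta_{X_\mathbb{C}}T$, so $A$ is similar to $T$ and $\sigma(A)=\sigma(T)$; therefore $\sigma(T)\subset\sigma(\overline{T})$. The only genuinely delicate point is the invariance of $I_\mathbb{C}$ under $\overline{T}$, and this is exactly the place where one needs $T$ to be a lattice homomorphism rather than an arbitrary bounded operator; everything else is a routine spectral splitting for an operator that commutes with a complemented-subspace projection.
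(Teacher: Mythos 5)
Your proposal is correct, and it follows the paper's overall strategy: the same lemma giving the decomposition $\fbl_\mathbb{C}[X_\mathbb{C}]=\delta_{X_\mathbb{C}}(X_\mathbb{C})\oplus I_\mathbb{C}$ with $I_\mathbb{C}=\operatorname{Ker}(\beta)$, invariance of both summands under $\overline{T}$, the spectral splitting $\sigma(\overline{T})=\sigma\bigl(\overline{T}|_{\delta_{X_\mathbb{C}}(X_\mathbb{C})}\bigr)\cup\sigma\bigl(\overline{T}|_{I_\mathbb{C}}\bigr)$, and the identification of the first restriction with $T$ via the isometry $\delta_{X_\mathbb{C}}$. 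Where you genuinely diverge is in the central technical step, the invariance of $I_\mathbb{C}$. The paper proves this by hand: it takes $f\in I$, approximates it in norm by lattice-linear combinations $f_n=\bigvee_k \delta_{Z_\mathbb{R}}(x_{1,k},x_{2,k})-\bigvee_j \delta_{Z_\mathbb{R}}(y_{1,j},y_{2,j})$, computes $\beta\overline{T}f_n=T(\beta f_n)$ explicitly (the lattice homomorphism hypothesis on $T$ enters when pulling $T$ out of the finite suprema), and passes to the limit to get $\beta\overline{T}f=T\beta f=0$. You instead obtain the global intertwining identity $\beta\overline{T}=T\beta$ in one stroke from the uniqueness clause of the universal property: both sides are lattice homomorphisms $\fbl_\mathbb{C}[X_\mathbb{C}]\to X_\mathbb{C}$ extending $T$, and here the hypothesis on $T$ enters to guarantee that $T\beta$ is a lattice homomorphism at all. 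Your route is shorter and more conceptual, bypassing the density and approximation argument entirely, and it makes crystal clear that the only obstruction for general $T$ is that $T\beta$ fails to be a lattice homomorphism; the paper's computation buys concreteness, exhibiting exactly how the lattice operations interact with $T$ inside the function-space model of $\fbl_\mathbb{C}[X_\mathbb{C}]$. Both correctly locate the use of the hypothesis, and both then conclude with the same standard fact about spectra of operators commuting with a bounded projection.
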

\begin{proof}
For simplicity, throughout this proof we shall write $Z=X_\mathbb{C}$.  By the previous lemma we know that $\fbl_\mathbb{C}[Z]$ can be decomposed into a direct sum $\fbl_\mathbb{C}[Z]=\delta_{Z}(Z)\oplus I_\mathbb{C}$, where $I_\mathbb{C}=\text{Ker}(\beta)$. Since $\beta\delta_Z=\text{Id}$, by the definition of $\delta_Z$, we have that 
$$
(x_1,x_2)=\beta\delta_Z(x_1,x_2)=(\beta\delta_{Z_\mathbb{R}}(x_1,x_2),\beta\delta_{Z_\mathbb{R}}(x_2,-x_1)),
$$ 
for every $(x_1,x_2)\in Z$, so $\beta \delta_{Z_\mathbb{R}}(x_1,x_2)=x_1$.

Let us see that $\overline{T}(f+ig)\in \text{Ker}(\beta)=I_\mathbb{C}$ whenever $f+ig\in \text{Ker}(\beta)=I_\mathbb{C}$. This is equivalent to the fact that $\overline T f\in I$ whenever $f\in I$. Fix any $f\in I$. Since $f\in \fbl[Z_\mathbb{R}]$, there is a sequence $(f_n)_{n=1}^\infty$ in $\fvl[Z_\mathbb{R}]$ such that $\|f_n-f\|_{\fbl_\mathbb{C}[Z_\mathbb{R}]}\to 0$. For each $n\in\mathbb{N}$, we shall write
$$
f_n=\bigvee_{k=1}^{m(n)} \delta_{Z_\mathbb{R}}\left(x_{1,k}^{(n)},x_{2,k}^{(n)}\right)- \bigvee_{j=1}^{m(n)} \delta_{Z_\mathbb{R}}\left(y_{1,j}^{(n)},y_{2,j}^{(n)}\right).
$$

By the continuity of $\beta$ and $\overline T$, we have that the sequence $\beta \overline T f_n$ converges to $\beta \overline T f$. The next identity will enable us to show that the limit $\beta \overline T f$ is equal to zero:
\begin{eqnarray*}
 \beta \overline T f_n &=&  \beta \overline T \left(\bigvee_{k=1}^{m(n)} \delta_{Z_\mathbb{R}}\left(x_{1,k}^{(n)},x_{2,k}^{(n)}\right)- \bigvee_{j=1}^{m(n)} \delta_{Z_\mathbb{R}}\left(y_{1,j}^{(n)},y_{2,j}^{(n)}\right)\right) \\
&=& \beta \left(\bigvee_{k=1}^{m(n)} \delta_{Z_\mathbb{R}}\left(T x_{1,k}^{(n)},T x_{2,k}^{(n)}\right)- \bigvee_{j=1}^{m(n)} \delta_{Z_\mathbb{R}}\left(T y_{1,j}^{(n)},T y_{2,j}^{(n)}\right)\right) \\
&=& \bigvee_{k=1}^{m(n)} T x_{1,k}^{(n)} - \bigvee_{j=1}^{m(n)} T y_{1,j}^{(n)} = T\left(\bigvee_{k=1}^{m(n)} x_{1,k}^{(n)} - \bigvee_{j=1}^{m(n)} y_{1,j}^{(n)}  \right)=T (\beta f_n).
\end{eqnarray*}
Thus, by the continuity of $T$ we obtain that $(\beta \overline T f_n)_{n=1}^\infty$ converges to $T \beta f$, which is zero, since we are assuming that $f\in I=\text{Ker}(\beta)$. By the uniqueness of the limit, $\beta \overline T f=0$, so $\overline T f\in I$.
Finally, given that $I_\mathbb{C}$ and $\delta_{Z}(Z)$ are $\overline T$-invariant subspaces, it can be easily verified that (see \cite[Section 6.1, Ex. 17]{AA-book})
$$
\sigma(\overline{T})=\sigma(T)\cup \sigma\bigl(\left.\overline T\right|_{I_\mathbb{C}}\bigr).
$$
\end{proof}

\begin{rem}
The previous proposition remains true for operators which are \textit{similar} to lattice homomorphisms: an operator $T$ on a complex Banach space $X$ is said to be similar to a lattice homomorphism if there exist a complex Banach lattice $Y_\mathbb{C}$ and an invertible operator $S:X\to Y_\mathbb{C}$ such that $STS^{-1}$ is a lattice homomorphism.
\end{rem}

The following remark suggests that Proposition \ref{p:latticehomo} might not hold for $T$ being an arbitrary operator.

\begin{rem}
Let $E$ be an uncomplemented subspace of $\ell_1$ which is isomorphic to $\ell_1$ \cite{Bourgain81}. Let $T:\ell_1 \to \ell_1$ be an isomorphism onto $E$. We have that $0\in \sigma_a(\overline T)$ even though $0\in \sigma(T)\backslash\sigma_a(T)$. Indeed, suppose otherwise that $\overline T$ is bounded below. Let us denote by $S:\ell_1\to E$ the operator defined by $Sx=Tx$ for every $x\in \ell_1$, so that $\iota S=T$, where $\iota$ stands for the canonical inclusion of $E$ into $\ell_1$. Thus, $\overline T=\overline \iota \overline S$ and since $\overline S$ is a lattice isomorphism of $\fbl_\mathbb{C}[\ell_1]$ onto $\fbl_\mathbb{C}[E]$ and we are assuming that $\overline T$ is bounded below, it follows that $\overline \iota$ is a lattice embedding $\fbl_\mathbb{C}[E]$ into $\fbl_\mathbb{C}[\ell_1]$. This implies that any $R:E\to L_1(\mu)$ extends to $\hat{R}:\ell_1\to L_1(\mu)$ with $\|\hat R\|=\|R\|$ (see \cite[Section 3]{OTTT}). In particular, $S^{-1}:E\to \ell_1$ extends to $\widehat{S^{-1}}:\ell_1\to\ell_1$ and then we could define a projection $P=\iota S\widehat{S^{-1}}$ of $\ell_1$ onto $E$. This is a contradiction.
\end{rem}

\section*{Acknowledgements}
Research supported by Agencia Estatal de Investigaci\'on, Ministerio de Ciencia e Innovaci\'on under grants PID2020-116398GB-I00 and CEX2019-000904-S funded by: MCIN/AEI/ 10.13039/501100011033.  The first-named author benefited from an FPU Grant FPU20/03334 from the Ministerio de Universidades.

\def\cprime{$'$}

\providecommand{\MR}{\relax\ifhmode\unskip\space\fi MR }

\end{document}